\documentclass[11pt]{article}
\usepackage[margin=1in]{geometry}
\usepackage{amsmath,amssymb,amsthm,booktabs,longtable,array,calc,tabularx}
\usepackage{graphicx}
\usepackage{microtype}
\usepackage{enumitem}
\usepackage{placeins}
\usepackage[hidelinks]{hyperref}

\usepackage[T1]{fontenc}
\usepackage[utf8]{inputenc}
\newtheorem{theorem}{Theorem}[section]
\newtheorem{proposition}[theorem]{Proposition}
\newtheorem{lemma}[theorem]{Lemma}
\newtheorem{corollary}[theorem]{Corollary}
\theoremstyle{definition}
\newtheorem{definition}[theorem]{Definition}
\theoremstyle{remark}

\providecommand{\tightlist}{\setlength{\itemsep}{0pt}\setlength{\parskip}{0pt}}
\begin{document}
\title{Exact Labelled Layer-Order Classification for Period-2 Mountain--Valley Assignments on $2\times 2k$ Maps}
\author{Michael Fofonov\\
Alferov University, Saint Petersburg, Russia\\
\texttt{spicmaff@gmail.com}}
\date{}
\maketitle

\begin{abstract}
We study exact labelled layer orders for prescribed period-2 mountain--valley assignments on ordinary orthogonal $2\times 2k$ maps. A state records a strict total order of the labelled faces; fold sequences and symmetry classes are not counted. For $k\ge2$, the standard local flat-foldability condition partitions the 64 formal six-bit words into 48 locally obstructed and 16 locally admissible words. The admissible words split into an eight-word pleat regime with singleton state sets and two four-word constant-row regimes. For canonical $MMMMVV$, upper/lower pivot compatibility is exactly $a=1$, $a=m$, or $a=b$, where $m=2k-1$; for canonical $MVMMMM$, it is exactly $B_R(a)=B_R(b)$, with $B_R(1)=0$ and $B_R(p)=\lfloor p/2\rfloor$ for $p\ge2$. Exact labelled symmetry transports these canonical fibres to every admissible period word, and each compatible ordered pivot pair determines one labelled state. The resulting four state-set regimes have cardinalities $0$, $1$, $6k-5$, and $4k-3$ as a downstream corollary. The boundary $k=1$ is treated separately: after projection to the four physical crease bits, the state set is empty or a singleton according to $\delta_0$. Bounded exact computation independently validates finite instances and the supplementary tables, but the all-$k$ results follow from the symbolic arguments.
\end{abstract}

\section{Introduction}
Once a rectangular map is placed flat, feasibility still depends on the vertical ordering of overlapping labelled faces. For prescribed mountain--valley assignments, local crease directions induce precedence relations, while global nonintersection imposes additional nesting constraints on the resulting order \cite{Nishat2013,NishatWhitesides2013}. This makes the layer-order set itself a natural object: existence asks whether the set is empty, whereas enumeration records only its cardinality.

We consider ordinary orthogonal $2\times 2k$ maps whose mountain--valley labels repeat with period two in the horizontal, upper-vertical, and lower-vertical crease families. The formal assignment is a six-bit word $P=(h_0,h_1,u_0,u_1,l_0,l_1)$, but these six bits are periodic labels rather than six physical creases. Our primary object is the exact set $F(P,k)$ of globally valid strict total orders of all labelled faces, with no quotient by symmetry and no counting of fold sequences.

Period two is restrictive enough that, in the constant-row branches, each physical row reduces to a one-parameter pivot normal form, while the interaction between the two row orders remains global and produces two distinct compatibility mechanisms. This balance is what makes an explicit state-set description possible without reducing the problem to a purely local count.

For $k\ge2$, the classification is organized by the sign invariants $\delta_i=h_0h_1u_il_i$, $\tau=u_0u_1$, and $\rho=h_0h_1$. A standard local condition eliminates 48 of the 64 formal words, and exact labelled symmetries reduce the remaining global analysis to pleat states and two canonical constant-row families. The $MMMMVV$ family is governed by a boundary-or-diagonal pivot criterion, whereas the $MVMMMM$ family is governed by equality of a two-site block coordinate. These mechanisms assemble into an exact four-regime state-set theorem, with the formulas $0$, $1$, $6k-5$, and $4k-3$ appearing only afterward as cardinality consequences.

We use established final-state validity and order-extension frameworks, together with the known uniform-sign strip family, as starting points for the analysis below. Proposition~\ref{obj:P2} records the row orders in explicit seam-indexed form and gives a self-contained derivation in the notation needed for the two-row problem; no priority claim is made for the underlying one-dimensional classification.

The family-specific results proved here begin with the interaction of the two row states. We give exact labelled transports among the period words, explicit compatibility criteria for the two constant-row canonical families, and a multiplicity-one correspondence between compatible ordered pivot pairs and labelled states after the remaining two-dimensional nonintersection conditions are verified. The all-word state-set classification is then assembled from those fibres, and the count formulas are downstream cardinality consequences. The width-two case $k=1$ is handled separately because two formal period bits have no physical crease carriers there.

The proof follows the same structural order as the statement: local reduction, exact symmetries, the pleat singleton mechanism, row-local pivot normal forms, the two canonical constant-row compatibility theorems, and final all-word synthesis. Technical interval and uniqueness arguments are retained in the appendices so that the main text can display the mechanisms without replacing proof obligations by intuition. Finite exact enumeration is used as an independent validation tool, not as a premise for any all-$k$ statement.

\section{Related work}
\subsection{Layer-order validity for rectangular maps}
Nishat gives a validity characterization for labelled rectangular-map orders that combines the directed mountain--valley precedence network with twin-butterfly stacking or nesting conditions, together with an exact-order validation procedure \cite{Nishat2013}. Nishat and Whitesides provide a shorter $2\times n$ publication-lineage treatment of the same map-folding setting \cite{NishatWhitesides2013}. We use these ingredients as background semantics, including the degree-four local flat-foldability condition whose period-two specialization yields the two $\delta_i$ equations \cite{Nishat2013,NishatWhitesides2013}.

\subsection{\texorpdfstring{Direct $2\times n$ and all-state frameworks}{Direct 2 x n and all-state frameworks}}
Morgan's 2012 thesis gives a polynomial-time algorithm for deciding flat foldability of general assigned $2\times n$ maps and, in its elementary cases, already supplies existence for the two locally admissible structural branches that arise after the period-two local reduction \cite[Section~2.2.2]{Morgan2012}. Thus nonemptiness of the locally admissible period-two assignments is not claimed here as new; the issue studied below is their exact labelled state sets. Jia and Mitani formulate fixed-MV $2\times n$ folding as an order-extension problem over a $1\times2n$ strip representation with map-specific nonintersection filtering \cite{JiaMitani2026}. Their fixed-assignment procedure is an existence/decision framework; their broader enumeration procedure enumerates MV patterns rather than the multiplicity of every labelled layer order of one fixed pattern. Jia, Mitani, and Uehara also study the related $2\times n$ box-pleated geometry without a prescribed mountain--valley assignment \cite{JiaMitaniUehara2020}. These works provide direct algorithmic context but do not state the period-two fibre criteria proved below.

Akitaya, Demaine, and Ku give a broader facewise layer-order constraint formulation, decompose the residual constraint system into independent components, and preserve multiplicity through Cartesian-product assembly of component solutions \cite{AkitayaDemaineKu2026}. Accordingly, general exact-state enumeration and multiplicity-preserving decomposition are treated here as established framework-level ideas rather than as contributions of the period-two specialization.

\subsection{Fixed-MV strips and pleat context}
One-dimensional fixed-MV stamp folding has an established counting and structural literature. Asano et al. prove that a unit-spaced strip has a unique folded state if and only if its MV pattern is a pleat \cite{AsanoEtAl2010}. For the uniform-sign family, Hull et al. prove $c(M^q)=c(V^q)=q$, describe the corresponding two-spiral states, and use the adjacent pair of extreme faces as the structural parameter \cite[Theorem~2.1 and the proof of Theorem~3.1]{HullEtAl2025}. With $q=2k-1$, this is the one-dimensional family underlying Proposition~\ref{obj:P2}; the formulas $R^\pm(k,p)$ below are an explicit seam-indexed reformulation chosen for the subsequent two-row analysis, not a new one-dimensional classification. Hoshido et al. provide later fixed-MV strip context \cite{HoshidoEtAl2025}. Uehara's 2011 chapter and Umesato et al. 2013 remain part of that historical literature \cite{Uehara2011,UmesatoEtAl2013}; we do not use their unavailable full bodies as sole proposition-level authority. We therefore make no priority claim for either the underlying uniform-sign row family or the pleat mechanism.

\subsection{Algebraic layer-order work}
The fixed-row horizontal reconstruction used in the $MVMMMM$ branch is an instance of the classical single-stack problem; in permutation language, the generic three-label obstruction is the standard $231$ obstruction to one-stack sortability \cite{Ulfarsson2012}. The work specific to the present family is the evaluation of that stack condition on the pivot-indexed row words, together with the proof that a surviving horizontal merge satisfies the remaining two-dimensional map constraints.

Ku, Terao, and Terao have a recent algebraic treatment of layer-order constraints in the same broad methodological lineage \cite{KTT2026}. The full primary chapter body was not accessible to us for proposition-level comparison, so we do not infer which of the explicit period-two statements may follow from that work and make no priority claim relative to it. The distinction maintained throughout this paper is therefore between established general validity, strip, stack, and solver frameworks and the explicit symbolic specialization proved here, not between possible and impossible prior approaches.

\section{Model, semantics, and period-2
notation}\label{model-semantics-and-period-2-notation}

Figure~\ref{fig:m4-f1} summarizes the physical face and crease notation used below; the definitions remain textual and do not rely on the schematic.

\begin{figure}[htbp]
\centering
\includegraphics[width=.92\linewidth]{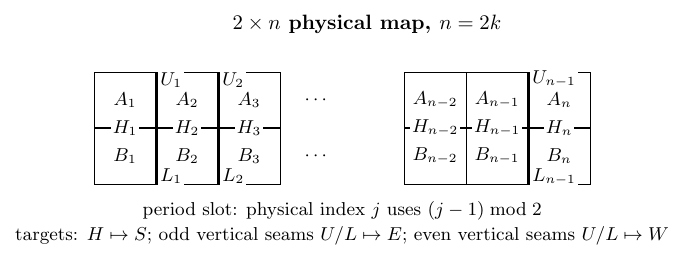}
\caption{Schematic of the physical $2\times n$ map ($n=2k$). Faces remain labelled. $H_c$ is the horizontal crease between $A_c$ and $B_c$; $U_i$ and $L_i$ are the physical shared seams between adjacent upper and lower faces. Physical index $j$ uses period slot $(j-1)\bmod 2$. The six formal period bits specify repeating labels, not six physical creases.}
\label{fig:m4-f1}
\end{figure}

\begin{definition}[Exact labelled layer-order states]\label{obj:D1}
Fix \(k\ge2\) and put \(n=2k\). The upper faces are \(A_1,\ldots,A_n\)
and the lower faces are \(B_1,\ldots,B_n\). The physical creases are

\begin{itemize}
\tightlist
\item
  \(H_c=(A_c,B_c)\) for \(1\le c\le n\);
\item
  \(U_i=(A_i,A_{i+1})\) for \(1\le i\le n-1\);
\item
  \(L_i=(B_i,B_{i+1})\) for \(1\le i\le n-1\).
\end{itemize}

A period-2 mountain--valley assignment is encoded by \[
P=(h_0,h_1,u_0,u_1,l_0,l_1)\in\{M,V\}^6.
\] The crease with physical index \(j\) uses period slot
\((j-1)\bmod2\): \(H_c\) receives \(h_{(c-1)\bmod2}\), and \(U_i,L_i\)
receive \(u_{(i-1)\bmod2},l_{(i-1)\bmod2}\).

The validity semantics below are the specialization of the
labelled-order framework for rectangular map folding \cite{Nishat2013,NishatWhitesides2013}. A physical flat-folded state of the prescribed map is represented by a strict labelled total order of all \(4k\) faces. Two logically distinct requirements apply. First, the prescribed crease assignment itself must satisfy ordinary local flat-foldability at every present interior vertex; this is a prerequisite on the assignment \(P\), independent of any proposed total order. Throughout, $x\prec y$ means that face $x$ lies above face $y$; displayed total orders are written from top to bottom. We consider final zero-thickness layer orders in this rectangular-map model, not the existence of a prescribed continuous folding motion. Second, a candidate total order must satisfy the following three \emph{global order-validity} conditions.

\begin{enumerate}
\def\labelenumi{\arabic{enumi}.}
\tightlist
\item
  \textbf{Directed M/V precedence.} Identify \(A_c\) with row \(0\) and
  \(B_c\) with row \(1\). A face \((r,c)\) is light-up exactly when
  \((r+c-1)\bmod2=0\). Across an \(M\) crease, the light-up wing
  precedes the dark-up wing; across a \(V\) crease, the dark-up wing
  precedes the light-up wing.
\item
  \textbf{Folded target classes.} Every \(H_c\) has target class \(S\).
  Each \(U_i\) and \(L_i\) has target class \(E\) for odd \(i\) and
  \(W\) for even \(i\).
  The symbols \(S,E,W\) identify target classes in the folded geometry: only creases assigned to the same target class can enter the same disjoint-wing Butterfly test in the next clause. Thus \(S\) collects horizontal creases, while \(E\) and \(W\) distinguish the two vertical-seam parities.
\item
  \textbf{Butterfly/noncrossing validity.} For two distinct creases with
  disjoint wing sets and the same target class, let their endpoint
  positions be \(a_0<a_1\) and \(b_0<b_1\). The pair is invalid exactly
  when the endpoints strictly alternate, \[
  a_0<b_0<a_1<b_1
  \qquad\text{or}\qquad
  b_0<a_0<b_1<a_1.
  \] Equivalently, the two endpoint intervals must be nested or
  disjoint. Creases sharing a face are not part of this disjoint-wing
  test.
\end{enumerate}

Let \(F(P,k)\) be the exact set of physical globally valid labelled layer-order states for the period-expanded map, represented by their strict total orders. The local flat-foldability requirement is not a fourth order predicate, a state coordinate, a pivot parameter, or an auxiliary choice: it constrains the prescribed assignment before any candidate order is tested. Consequently, an assignment that fails local flat-foldability has no physical state. For a locally admissible assignment, candidate-order membership is tested by the three global order-validity clauses above. Faces remain labelled and no symmetry quotient is taken. Fold sequences, auxiliary certificates, pivot parameters, and computational witnesses are not states.
\end{definition}
\begin{definition}[Period-2 invariants]\label{obj:D2}
Use the sign convention \(M=+1\), \(V=-1\) and multiplication of signs.
Define \[
\rho=h_0h_1,\qquad
\tau=u_0u_1,\qquad
\delta_i=h_0h_1u_i l_i=\rho u_i l_i\quad(i=0,1),
\] and put \[
m=n-1=2k-1.
\] Further coordinates are introduced only when they become useful: row
pivots in Section 6, the MVMMMM block coordinate \(B_R\) in Section 7.2,
and the reflected coordinate \(B_L\) in Section 8 after the physical
seam reflection has been established.
\end{definition}
\section{Local reduction and exact labelled
symmetries}\label{local-reduction-and-exact-labelled-symmetries}

Definition 3.1 separates assignment-level physical admissibility from candidate-order validity. The three displayed order conditions test a proposed labelled total order; the local degree-4 rule is instead a necessary physical condition on the prescribed crease assignment itself. The next lemma specializes that prerequisite to the period-two words. Because the local rule depends only on \(P\), not on a proposed order, failure at either physical parity makes \(F(P,k)\) empty before any global pivot or merge analysis begins; no new state coordinate or order predicate is introduced.

\begin{lemma}[Period-2 local obstruction]\label{obj:L1}
For every \(P\in\{M,V\}^6\) and every \(k\ge2\), \[
F(P,k)\ne\varnothing
\quad\Longrightarrow\quad
\delta_0=\delta_1=-1.
\] Equivalently, if \(\delta_0\ne-1\) or \(\delta_1\ne-1\), then
\(F(P,k)=\varnothing\).
\end{lemma}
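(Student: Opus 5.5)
The plan is to apply the standard degree-four local flat-foldability condition (Maekawa together with Kawasaki/the big-little-big rule, as specialized in \cite{Nishat2013,NishatWhitesides2013}) at each interior vertex of the $2\times 2k$ grid. By Definition~\ref{obj:D1}, an assignment that fails this condition at some present interior vertex has no physical state, so $F(P,k)=\varnothing$ before any order is examined. Hence it suffices to show that local admissibility at the interior vertices forces $\delta_0=\delta_1=-1$.

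First, identify the interior vertices. For $n=2k$ these are the points between columns $i$ and $i+1$ on the horizontal midline, for $1\le i\le n-1$. The four creases meeting there are $H_i$, $H_{i+1}$, $U_i$, and $L_i$. Second, recall the orthogonal degree-four rule: all angles equal $90^\circ$, so Maekawa's theorem forces three creases of one sign and one of the other. The collinear pair opposite the odd crease must agree in sign, while the odd crease and the crease opposite it differ. The two collinear pairs are $\{H_i,H_{i+1}\}$ (horizontal) and $\{U_i,L_i\}$ (vertical). Exactly one pair therefore has equal signs and the other has opposite signs. In sign arithmetic this is equivalent to
\[
(\text{sign }H_i)(\text{sign }H_{i+1})(\text{sign }U_i)(\text{sign }L_i)=-1,
\]
and conversely, any assignment satisfying this product condition is locally flat-foldable at the vertex.

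Third, substitute the period-two labels. $H_i$ and $H_{i+1}$ carry $h_{(i-1)\bmod 2}$ and $h_{i\bmod 2}$, which are always $\{h_0,h_1\}$, so their product is $\rho$. $U_i$ and $L_i$ carry $u_s$ and $l_s$ with $s=(i-1)\bmod2$. The vertex condition thus reads $\rho u_s l_s=\delta_s=-1$. Since $k\ge2$, the index range $1\le i\le n-1$ contains both $i=1$ (slot $0$) and $i=2$ (slot $1$). Both $\delta_0=-1$ and $\delta_1=-1$ are therefore required, which proves the contrapositive form of the lemma. The remark that $k=1$ gives only the single vertex $i=1$ explains why that boundary case is treated separately.

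The main obstacle is justifying the exact form of the degree-four rule as the paper's imported local condition. In particular, one must confirm that equality inside one collinear pair together with inequality inside the other is both necessary and sufficient at a right-angled vertex. This is where I would rely on the cited flat-foldability characterization rather than reprove Maekawa and Kawasaki. The only other point needing care is the indexing check that both parities actually occur among the interior vertices, and this is immediate for $n\ge4$.
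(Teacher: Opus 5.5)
Your proposal is correct and follows essentially the same route as the paper. You identify the degree-four vertex at each column separator as meeting $H_i,H_{i+1},U_i,L_i$, reduce the standard local rule to the sign product being $-1$, substitute the period labels to get $\delta_{(i-1)\bmod 2}=-1$, and observe that both separator parities occur once $k\ge 2$. The paper states the local rule directly as ``an odd number of mountain creases'' rather than deriving it through Maekawa and the collinear-pair analysis, and it does not use the converse sufficiency you mention; neither difference affects the argument.
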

\begin{proof}
 At the separator between columns \(c\) and \(c+1\), the
ordinary degree-\(4\) interior vertex is incident to
\(H_c,H_{c+1},U_c,L_c\). The standard local flat-foldability rule
\cite{Nishat2013,NishatWhitesides2013} requires an odd number
of mountain creases. Under the sign convention this is equivalent to \[
H_cH_{c+1}U_cL_c=-1.
\] With \(i=(c-1)\bmod2\), period expansion gives \[
h_i h_{1-i}u_i l_i=h_0h_1u_i l_i=\delta_i.
\] For \(k\ge2\), both separator parities occur physically; at the
minimum width \(k=2\), the separators \(1,2,3\) already have parities
\(0,1,0\). Thus both equations are necessary.
\end{proof}

\begin{corollary}[The $48/16$ local split]\label{obj:C1}
Exactly \(16\) of the \(64\) formal period words satisfy
\(\delta_0=\delta_1=-1\). The other \(48\) have empty state set for
every \(k\ge2\).
\end{corollary}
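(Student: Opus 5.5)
The count splits into a purely combinatorial part on the six bits and an emptiness part that is immediate from Lemma~\ref{obj:L1}. I will count the words with \(\delta_0=\delta_1=-1\) by solving these equations for two of the six bits.

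Under the sign convention, the two conditions are
\[
h_0h_1u_0l_0=-1,\qquad h_0h_1u_1l_1=-1 .
\]
Because each sign squares to \(+1\), these are equivalent to
\[
l_0=-h_0h_1u_0,\qquad l_1=-h_0h_1u_1 .
\]
So for each of the \(2^4=16\) choices of \((h_0,h_1,u_0,u_1)\) there is exactly one pair \((l_0,l_1)\) that satisfies both equations. The map \(P\mapsto(h_0,h_1,u_0,u_1)\) is therefore a bijection from the solution set onto \(\{M,V\}^4\). This gives exactly \(16\) admissible words, and hence exactly \(64-16=48\) words with \(\delta_0\ne-1\) or \(\delta_1\ne-1\).

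For those \(48\) words, the contrapositive form of Lemma~\ref{obj:L1} gives \(F(P,k)=\varnothing\) for every \(k\ge2\). This applies uniformly in \(k\), because the lemma already checked that both separator parities occur for \(k\ge2\).

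There is no real obstacle here. The only point to watch is that the corollary claims emptiness only for the \(48\) obstructed words and claims nothing yet about the \(16\) admissible ones. Their nonemptiness and exact state sets are left to the later regime analysis.
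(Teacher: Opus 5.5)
Your proof is correct and matches the paper's argument: you fix \(h_0,h_1,u_0,u_1\) freely and solve \(l_i=-\rho u_i\), which gives exactly \(2^4=16\) admissible words, and the remaining \(48\) are empty by the contrapositive of Lemma~\ref{obj:L1}. Your explicit appeal to that lemma for the emptiness clause states a step the paper leaves implicit.
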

\begin{proof}
 Choose \(h_0,h_1,u_0,u_1\) arbitrarily. The two local
equations then determine \[
l_i=-\rho u_i\qquad(i=0,1)
\] uniquely. Hence \(2^4=16\) words survive the local test. The literal
\(64\)-word table is only a finite coverage check and is not a premise
of this argument or of the all-\(k\) classification.
\end{proof}

The remaining proofs repeatedly move between period words. Because
states are labelled, the required symmetry statements must be exact
state-set bijections rather than statements ``up to symmetry.''

\begin{proposition}[Exact labelled symmetries]\label{obj:P1}
Let \(n=2k\) be even. Define \[
\phi_R(A_c)=B_c,\qquad \phi_R(B_c)=A_c,
\] \[
\phi_H(A_c)=A_{n+1-c},\qquad \phi_H(B_c)=B_{n+1-c}.
\] For a total order \(\Pi\), let \(\phi(\Pi)\) denote elementwise
relabelling and \(\operatorname{rev}(\Pi)\) literal reversal. The
following transformations are exact labelled state-set bijections:

\begin{longtable}[]{@{}
  >{\raggedright\arraybackslash}p{(\columnwidth - 4\tabcolsep) * \real{0.3333}}
  >{\raggedright\arraybackslash}p{(\columnwidth - 4\tabcolsep) * \real{0.3333}}
  >{\raggedright\arraybackslash}p{(\columnwidth - 4\tabcolsep) * \real{0.3333}}@{}}
\toprule\noalign{}
\begin{minipage}[b]{\linewidth}\raggedright
\(g\)
\end{minipage} & \begin{minipage}[b]{\linewidth}\raggedright
word action \(gP\)
\end{minipage} & \begin{minipage}[b]{\linewidth}\raggedright
total-order action \(T_g\)
\end{minipage} \\
\midrule\noalign{}
\endhead
\bottomrule\noalign{}
\endlastfoot
\(C\) & \((-h_0,-h_1,-u_0,-u_1,-l_0,-l_1)\) &
\(\operatorname{rev}(\Pi)\) \\
\(R\) & \((h_0,h_1,l_0,l_1,u_0,u_1)\) &
\(\operatorname{rev}(\phi_R(\Pi))\) \\
\(H\) & \((h_1,h_0,u_0,u_1,l_0,l_1)\) &
\(\operatorname{rev}(\phi_H(\Pi))\) \\
\(CR\) & \((-h_0,-h_1,-l_0,-l_1,-u_0,-u_1)\) & \(\phi_R(\Pi)\) \\
\(CH\) & \((-h_1,-h_0,-u_0,-u_1,-l_0,-l_1)\) & \(\phi_H(\Pi)\) \\
\end{longtable}
\addtocounter{table}{-1}

For every listed \(g\), \[
\Pi\in F(P,k)
\quad\Longleftrightarrow\quad
T_g(\Pi)\in F(gP,k),
\] and therefore \[
F(gP,k)=T_g(F(P,k))
\] as literal labelled sets.

Under horizontal reflection, the physical seam between columns \(p\) and
\(p+1\) is sent to the seam between columns \(n-p\) and \(n-p+1\). Thus
the seam coordinate transforms as \[
p\longmapsto n-p.
\]
\end{proposition}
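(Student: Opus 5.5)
The plan is to reduce everything to the three generators $C$, $R$, $H$ and verify, for each one, that it respects every ingredient of Definition~\ref{obj:D1}. The entries $CR$ and $CH$ are the composites $T_C\circ T_R$ and $T_C\circ T_H$: reversal commutes with relabelling and $\operatorname{rev}\circ\operatorname{rev}=\mathrm{id}$, which gives the displayed actions $\phi_R(\Pi)$ and $\phi_H(\Pi)$. Each generator is an involution on words, and $T_g$ is an involution on total orders, since $\phi_R,\phi_H$ are involutions commuting with $\operatorname{rev}$. So it suffices to prove the one-way implication $\Pi\in F(P,k)\Rightarrow T_g(\Pi)\in F(gP,k)$. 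Applying it to $gP$ and $T_g(\Pi)$ then gives the converse, and hence the literal set equality $F(gP,k)=T_g(F(P,k))$.

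For each generator I check four items in turn.
\begin{enumerate}
\item \textbf{Local admissibility.} The local prerequisite transfers, so that $gP$ is locally admissible exactly when $P$ is. Negating all six bits preserves each four-fold product $\delta_i$. Swapping $u\leftrightarrow l$ preserves $u_il_i$. Swapping $h_0\leftrightarrow h_1$ preserves $\rho$. Hence the emptiness forced by Lemma~\ref{obj:L1} is consistent on both sides.
\item \textbf{Crease correspondence.} The face map sends creases to creases of the correct $gP$ label.
\begin{itemize}
\item Under $\phi_R$, each $H_c$ is fixed and $U_i\leftrightarrow L_i$ with the same index, matching the word action $(u,l)\mapsto(l,u)$.
\item Under $\phi_H$, $H_c\mapsto H_{n+1-c}$, whose slot $(n-c)\bmod 2$ differs from $(c-1)\bmod 2$ by the odd number $n+1-2c$. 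This is the swap $h_0\leftrightarrow h_1$. Also $U_i\mapsto U_{n-i}$ and $L_i\mapsto L_{n-i}$, whose slot differs by the even number $n-2i$, so $u$ and $l$ are unchanged. This uses $n$ even, and gives the stated seam rule $p\mapsto n-p$.
\end{itemize}
\item \textbf{Target classes.} $S$ is fixed in all cases. Since $i$ and $n-i$ have the same parity, the classes $E$ and $W$ are preserved.
\item \textbf{Directed precedence.} This is the step needing care; see the next paragraph.
\end{enumerate}

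For directed precedence, the key observation is that every generator either reverses the order an even number of times combined with a light/dark swap, or neither, in a way that matches the M/V change.
\begin{itemize}
\item \textbf{$C$.} Reversal turns ``light precedes dark'' into ``dark precedes light'', which is exactly the rule for the negated label.
\item \textbf{$R$.} $\phi_R$ changes $r$ by one, so it flips the parity of $r+c-1$ and exchanges light-up and dark-up faces. The subsequent reversal flips precedence back, so the label is unchanged, as required since $R$ does not negate labels.
\item \textbf{$H$.} $c\mapsto n+1-c$ changes $r+c-1$ by the odd number $n+1-2c$, again swapping light and dark. Reversal compensates.
\end{itemize}

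Butterfly validity is the easiest part. Relabelling carries a pair of distinct creases with disjoint wing sets to such a pair, because the face map is a bijection. The pair stays in a common target class by item 3 of the list above. Strict alternation of endpoint positions is invariant both under relabelling, since positions are carried along, and under reversal, since $x\mapsto 4k+1-x$ maps alternating quadruples to alternating quadruples.

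The main obstacle I expect is only bookkeeping: getting the parity computations for $\phi_H$ right, both the slot shift of $H$ versus $U,L$ and the light/dark swap. This is where evenness of $n$ is essential, and I would present those two parity identities explicitly.
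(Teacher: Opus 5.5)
Your proof is correct and follows essentially the paper's route: the face maps preserve incidence, disjointness and target class; label complement or the checkerboard flip reverses every primitive precedence, and literal reversal compensates (the two cancel in $CR$ and $CH$); strict alternation survives relabelling and reversal; involutivity gives both directions. Your explicit check that $\delta_0,\delta_1$ are invariant, so that local admissibility transfers, is a small point the paper leaves implicit. One line is misstated, though not in substance: your summary sentence for the precedence step is wrong as written, since $C$ reverses the order once with no light/dark swap. The correct invariant is that the number of flips among label negation, checkerboard swap and order reversal is even, and that is exactly what your three bullets verify.
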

\begin{proof}
 Each face map preserves crease incidence, disjointness,
and folded target class. Complement \(C\) reverses every directed M/V
precedence by complementing the crease labels. Row swap \(R\) and
horizontal reflection \(H\) reverse directed precedences by flipping
checkerboard orientation. Hence \(C,R,H\) require literal order
reversal, while the two reversal mechanisms cancel in \(CR\) and \(CH\).
Strict endpoint alternation is preserved by bijective relabelling and by
order reversal. Each transformation is involutive, giving both
directions of the state-set equivalence. Appendix B records the literal
crease and parity maps.
\end{proof}

\section{Pleat singleton regime}\label{pleat-singleton-regime}

When \(\tau=-1\), the two row directions alternate in the way that turns
primitive precedences into a full chain. This makes uniqueness
transparent, but uniqueness is not yet existence: the forced chain still
has to satisfy every long-range Butterfly condition.

\begin{theorem}[Pleat singleton states]\label{obj:T1}
Assume \(k\ge2\), \(\delta_0=\delta_1=-1\), and \(\tau=-1\). For \[
P_+=MMMVVM,
\] put \[
Q_q=(A_{2q-1},B_{2q-1},B_{2q},A_{2q}),\qquad 1\le q\le k,
\] and \[
\Omega_+(k)=Q_1Q_2\cdots Q_k.
\] For \[
P_-=MVMVMV,
\] put \[
\Omega_-(k)=(A_1,A_2,\ldots,A_n,B_n,B_{n-1},\ldots,B_1).
\] Then \[
F(MMMVVM,k)=\{\Omega_+(k)\},\qquad
F(MVMVMV,k)=\{\Omega_-(k)\}.
\] The other six locally admissible pleat words have the exact singleton
state sets \[
\begin{array}{c|c}
P & F(P,k)\\\hline
MMVMMV & \{T_R(\Omega_+(k))\}\\
VVMVVM & \{T_{CR}(\Omega_+(k))\}\\
VVVMMV & \{T_C(\Omega_+(k))\}\\
VMMVMV & \{T_H(\Omega_-(k))\}\\
VMVMVM & \{T_C(\Omega_-(k))\}\\
MVVMVM & \{T_{CH}(\Omega_-(k))\}.
\end{array}
\]
\end{theorem}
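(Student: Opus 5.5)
The plan is to treat the two canonical words $P_+=MMMVVM$ and $P_-=MVMVMV$ directly, and then obtain the other six pleat words by transport along Proposition~\ref{obj:P1}. For each canonical word the argument has two halves. Uniqueness comes from showing that the directed M/V precedences alone already contain a Hamiltonian chain through all $4k$ faces, so that $F(P,k)$ has at most one element. Existence comes from checking that this forced chain passes the Butterfly/noncrossing test in every target class.

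\emph{Uniqueness.} Recall that $A_c$ is light-up iff $c$ is odd and $B_c$ is light-up iff $c$ is even. For $P_+$ I read off the following precedences from clause~1 of Definition~\ref{obj:D1}:
\begin{itemize}
\item $H_{2q-1}$ (label $M$) gives $A_{2q-1}\prec B_{2q-1}$;
\item $L_{2q-1}$ (label $V$) gives $B_{2q-1}\prec B_{2q}$;
\item $H_{2q}$ (label $M$) gives $B_{2q}\prec A_{2q}$;
\item $U_{2q}$ (label $V$) gives $A_{2q}\prec A_{2q+1}$.
\end{itemize}
Concatenated, these yield exactly the chain $Q_1Q_2\cdots Q_k=\Omega_+(k)$.

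For $P_-$ the sources are different:
\begin{itemize}
\item the creases $U_i$, alternating $M,V$, give $A_i\prec A_{i+1}$ for every $i$;
\item $H_n$ (label $V$, with $A_n$ dark) gives $A_n\prec B_n$;
\item the creases $L_i$, alternating $M,V$, give $B_{i+1}\prec B_i$ for every $i$.
\end{itemize}
This is the chain $\Omega_-(k)$. A chain that covers all faces has a unique linear extension, so $|F(P_\pm,k)|\le1$. All remaining precedence edges are then automatically consistent with the chain, since the chain is the only candidate; I will simply verify this.

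\emph{Existence.} This is the Butterfly check, which I expect to be the main obstacle, because it involves long-range pairs of creases. The approach is to compute endpoint positions in the explicit orders and show that any two same-class creases with disjoint wings have nested or disjoint intervals. The cases split by target class.

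In $\Omega_+(k)$, each face pair lies inside a single block $Q_q$ or straddles adjacent blocks.
\begin{itemize}
\item Class $S$: $H_{2q-1}$ occupies positions $\{4q-3,4q-2\}$ and $H_{2q}$ occupies $\{4q-1,4q\}$, which makes every pair disjoint.
\item Classes $E$ and $W$: $L_{2q-1}$ sits inside block $q$, while $U_{2q}$ and $L_{2q}$ straddle the boundary between blocks $q$ and $q+1$. I will show that same-parity seams occupy pairwise disjoint position intervals, using the fact that odd seams (class $E$) and even seams (class $W$) live in different block offsets.
\end{itemize}

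In $\Omega_-(k)$, every crease is nested around the fold at $A_n\prec B_n$.
\begin{itemize}
\item $H_c$ occupies positions $c$ and $2n+1-c$, so the horizontal creases form a perfectly nested family.
\item $U_i$ occupies $\{i,i+1\}$ and $L_i$ occupies $\{2n-i,2n+1-i\}$, so two same-class seams with disjoint wings are disjoint.
\end{itemize}
Only pairs of creases sharing a face are exempt, and no other crease pair can alternate. I will carry out these interval computations uniformly in $q$ and $i$, and record them in an appendix-style table of positions.

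\emph{Transport.} The admissible words with $\tau=-1$ number exactly eight: $h_0,h_1,u_0$ are free, $u_1=-u_0$, and $l_i=-\rho u_i$ by Corollary~\ref{obj:C1}. Direct application of the word actions gives
\begin{itemize}
\item $C P_+=VVVMMV$, $R P_+=MMVMMV$, $CR\,P_+=VVMVVM$;
\item $H P_-=VMMVMV$, $C P_-=VMVMVM$, $CH\,P_-=MVVMVM$.
\end{itemize}
These exhaust the remaining six pleat words. Proposition~\ref{obj:P1}'s identity $F(gP,k)=T_g(F(P,k))$ then yields the displayed singleton table as literal labelled sets.
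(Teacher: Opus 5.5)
Your route is the paper's: a Hamiltonian precedence chain for uniqueness, an explicit position audit of every same-target family for existence, and transport by Proposition~\ref{obj:P1}. Your precedence derivations for $P_\pm$ and the six word actions are all correct.

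The one step that fails as written is the $E/W$ audit for $\Omega_+(k)$. Same-parity seams do \emph{not} occupy pairwise disjoint position intervals.

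\begin{itemize}
\item You omitted $U_{2q-1}$. Its interval $[4q-3,4q]$ spans all of $Q_q$ and strictly contains $I(L_{2q-1})=[4q-2,4q-1]$.
\item Likewise $I(U_{2q})=[4q,4q+1]$ lies strictly inside $I(L_{2q})=[4q-1,4q+2]$.
\end{itemize}

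Both are genuinely tested pairs: the wing sets are disjoint and the target class is the same. They pass only because nesting is allowed, so the claim you need is ``nested or disjoint,'' not ``disjoint.''

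Concretely, the same-row $U/U$ and $L/L$ pairs are disjoint. The cross-row equal-parity $U/L$ pairs need the trichotomy at arbitrary separation, with the $L$ seam drawn from block $r$ and the $U$ seam from block $q$: for $r<q$ the $L$ interval lies entirely before the $U$ interval, for $r=q$ they are nested, and for $r>q$ it lies entirely after. This is exactly how Appendix~C.1 handles it. Your observation that odd and even seams sit at different block offsets only separates class $E$ from class $W$. Those classes never interact in the Butterfly test, so that observation cannot do this work.

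A smaller point: having a unique candidate order does not by itself make the remaining precedences consistent. If one failed, there would be no state at all, so they must be checked:
\begin{itemize}
\item for $P_+$, $A_{2q-1}\prec A_{2q}$ (from $U_{2q-1}$) and $B_{2q}\prec B_{2q+1}$ (from $L_{2q}$);
\item for $P_-$, $A_c\prec B_c$ for $c<n$.
\end{itemize}
All of these point forward in the respective orders, so the theorem goes through once these two repairs are made.
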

\begin{proof}
 For \(P_+\), the primitive precedences contain \[
A_{2q-1}\prec B_{2q-1}\prec B_{2q}\prec A_{2q}
\] inside each block and \(A_{2q}\prec A_{2q+1}\) between consecutive
blocks. These edges form a directed Hamiltonian path through all \(4k\)
faces, so every precedence-compatible total order equals
\(\Omega_+(k)\).

For \(P_-\), the relations \[
A_c\prec A_{c+1}\ (1\le c<n),\qquad
A_n\prec B_n,\qquad
B_{c+1}\prec B_c\ (1\le c<n)
\] form the Hamiltonian path \(A_1,\ldots,A_n,B_n,\ldots,B_1\), and
hence force \(\Omega_-(k)\).

These Hamiltonian chains establish uniqueness only. Appendix C verifies
full validity for arbitrary separation in all same-target families
\(H/H\), \(U/U\), \(L/L\), and \(U/L\). Thus both forced orders exist as
globally valid states. The exact symmetry proposition then transports
the two singleton states to the remaining six pleat words.
\end{proof}

\section{Row-local pivot normal
forms}\label{row-local-pivot-normal-forms}

The case \(\tau=+1\) does not collapse to a single chain. Instead, each
physical row has a rigid one-parameter family of admissible orders. The
parameter is a \textbf{pivot}: the vertical seam joining the first and
last faces of the row order. Once the two row pivots are fixed, the
global problem becomes a compatibility-and-merge problem between two
prescribed row chains.

The underlying uniform-sign one-dimensional family is known in the stamp-folding literature: for a strip with $q$ uniformly assigned creases, Hull et al. obtain exactly $q$ labelled states and a two-spiral description indexed by the adjacent extreme faces \cite[Theorem~2.1 and the proof of Theorem~3.1]{HullEtAl2025}. Proposition~\ref{obj:P2} records the same row family in the explicit seam coordinate and orientation conventions needed below. Its purpose here is to provide a self-contained row-normal-form interface for the two-dimensional compatibility problem, not to assert priority for the one-dimensional classification.

For one row of width \(2k\), write its faces as \(F_1,\ldots,F_{2k}\)
and set \[
O_i=F_{2i-1},\qquad D_i=F_{2i}\qquad(1\le i\le k).
\] The odd seam \(2i-1\) joins \((O_i,D_i)\) and has target class \(E\);
the even seam \(2i\) joins \((D_i,O_{i+1})\) and has target class \(W\).

For effective sign \(+\), impose the row-local directed constraints \[
O_i\prec D_i\quad(1\le i\le k),\qquad
O_{i+1}\prec D_i\quad(1\le i<k),
\] together with noncrossing of all same-target seam spans within the
row. For sign \(-\), reverse the directed inequalities. Let
\(R_{\mathrm{loc}}^\sigma(k)\) denote the resulting set of row-local
orders.

For \(1\le p\le2k-1\), define \(R^+(k,p)\) by \[
R^+(k,2s-1)=
(O_s,O_{s-1},\ldots,O_1,
D_1,\ldots,D_{s-1},
O_{s+1},\ldots,O_k,
D_k,\ldots,D_{s+1},D_s),
\] and \[
R^+(k,2s)=
(O_{s+1},\ldots,O_k,
D_k,\ldots,D_{s+1},
O_s,\ldots,O_1,
D_1,\ldots,D_s),
\] with empty blocks omitted. Set \[
R^-(k,p)=\operatorname{rev}(R^+(k,p)).
\]

\begin{proposition}[Row-local pivot normal forms]\label{obj:P2}
For every \(k\ge2\) and \(\sigma\in\{+,-\}\), \[
R_{\mathrm{loc}}^\sigma(k)
=
\{R^\sigma(k,p):1\le p\le2k-1\},
\] and the pivot \(p\) is unique. It is the physical seam whose incident
faces are the first and last faces of the row order.

If \(R_{\mathrm{glob}}^{P,r}(k)\) denotes the row restrictions induced
by globally valid states of a fixed word \(P\), then \[
R_{\mathrm{glob}}^{P,r}(k)\subseteq R_{\mathrm{loc}}^\sigma(k).
\] No converse is asserted. A global state must restrict to one of these
row-local normal forms, but a row-local pivot need not extend to a
global state.
\end{proposition}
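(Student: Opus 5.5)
The plan is to reduce to $\sigma=+$, prove the two inclusions between $R^{+}_{\mathrm{loc}}(k)$ and $\{R^+(k,p)\}$ separately, and then obtain the global restriction statement from the definitions. For the reduction: the row-local constraints for $\sigma=-$ are exactly the reversed directed constraints for $\sigma=+$. Noncrossing of endpoint intervals is invariant under literal reversal of a total order. Hence $\Pi\mapsto\operatorname{rev}(\Pi)$ is a bijection $R^+_{\mathrm{loc}}(k)\to R^-_{\mathrm{loc}}(k)$. Since $R^-(k,p)=\operatorname{rev}(R^+(k,p))$ by definition, the case $\sigma=-$ follows from $\sigma=+$. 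Reversal swaps first and last faces, so the pivot characterization transfers as well.

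For $\sigma=+$, I would first check that every $R^+(k,p)$ belongs to $R^+_{\mathrm{loc}}(k)$.

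\textbf{Precedence.} Each $D_i$ must lie below $O_i$ and below $O_{i+1}$. In both displayed words all $O$-blocks that could contain $O_i$ or $O_{i+1}$ precede the $D$-block containing $D_i$; this is an inspection of the two block patterns.

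\textbf{Noncrossing.} I would split the seams into the $E$ seams $(O_i,D_i)$ and the $W$ seams $(D_i,O_{i+1})$. In $R^+(k,2s-1)$ the faces $O_s,\dots,O_1,D_1,\dots,D_{s-1}$ form one spiral and $O_{s+1},\dots,O_k,D_k,\dots,D_s$ form a second spiral placed after it. Within a spiral, the same-class seams have nested intervals, since positions move symmetrically outward from the centre. Seams lying in different spirals either occupy disjoint intervals or one interval contains the other. The only seams meeting both spirals are those at the pivot and the two seams adjacent to it, and I would check these by hand. The case $R^+(k,2s)$ is analogous.

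\textbf{Pivot.} In both words the first and last faces are $O_s$ and $D_s$ (odd pivot) or $O_{s+1}$ and $D_s$ (even pivot). These are exactly the faces of seam $p$, so the $2k-1$ words are pairwise distinct and $p$ is recovered from the order.

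For the reverse inclusion, i.e.\ that every row-local order is some $R^+(k,p)$, the quickest route is cardinality. The row-local system is precisely the uniform-sign stamp-folding problem on $q=2k-1$ creases: every $D$ lies below both of its neighbours, and the $E/W$ target classes encode the two sides of the folded strip. Hull et al.\ \cite[Theorem~2.1]{HullEtAl2025} show this problem has exactly $q$ states. We have exhibited $2k-1$ distinct members, so equality follows.

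The main obstacle is making this identification of models airtight. The target classes $E$ and $W$ must correspond exactly to the left/right crease sides in the stamp model, so that the same noncrossing pairs are tested in both. For a self-contained backup I would argue by peeling:
\begin{itemize}
\item The top face must be an $O$, since every $D$ has an $O$ above it, and dually the bottom face must be a $D$.
\item The top face $O_s$ and the bottom face must be adjacent. Otherwise the seam at the top face on the side facing the bottom face would cross a same-class seam incident to the bottom face. This is the step I expect to be the real difficulty, and I would first check it on $k=2,3$.
\item Once the adjacency is established, deleting the pivot seam region splits the order into two monotone spirals. Their forms are forced by precedence together with nesting, and this yields the displayed normal forms.
\end{itemize}

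Finally, for the global inclusion, take $\Pi\in F(P,k)$ and restrict it to one physical row. Directed precedence across each $U_i$ (respectively $L_i$) becomes the effective-sign row constraint. Here $\sigma$ is the common value of the row's vertical labels under $\tau=+1$, adjusted for the checkerboard shift in the lower row. Two distinct same-target seams of one row either share a face or have disjoint wings. In the latter case the Butterfly clause of Definition~\ref{obj:D1} forbids alternation. Restriction preserves the relative positions of the endpoints, so the restricted order satisfies all row-local noncrossing conditions and lies in $R^\sigma_{\mathrm{loc}}(k)$. The nonconverse requires no proof.
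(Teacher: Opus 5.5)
Your reduction to $\sigma=+$ by literal reversal, the membership check for each $R^+(k,p)$, the recovery of the pivot from the first and last faces, and the restriction argument for $R^{P,r}_{\mathrm{glob}}(k)\subseteq R^{\sigma}_{\mathrm{loc}}(k)$ all coincide with the paper's Appendix A.3--A.5. The genuine difference is the necessity direction $R^{+}_{\mathrm{loc}}(k)\subseteq\{R^+(k,p)\}$.

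\textbf{Your route versus the paper's.} You obtain necessity by counting: you exhibit $2k-1$ distinct valid orders and invoke $c(M^q)=q$ from Hull et al.\ at $q=2k-1$. The paper proves it internally, in three steps.
\begin{enumerate}
\item For consecutive odd-seam spans $J_i=(O_i,D_i)$ and $J_{i+1}$, the three precedences together with $E$-noncrossing leave only the patterns $\mathsf{L}_i,\mathsf{R}_i,\mathsf{C}_i$.
\item The consecutive pairs $(\mathsf R,\mathsf L),(\mathsf R,\mathsf C),(\mathsf C,\mathsf L),(\mathsf C,\mathsf C)$ would make the even-seam spans $K_i=(O_{i+1},D_i)$ and $K_{i+1}$ alternate. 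Hence the relation word is $\mathsf L^*\mathsf R^*$ or $\mathsf L^*\mathsf C\mathsf R^*$.
\item Each such word pins down exactly one normal form.
\end{enumerate}

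Your route is shorter and logically sound once the model match is stated explicitly:
\begin{itemize}
\item equal-parity seams are exactly the creases at a common end of the unit-spaced strip, and they always have disjoint wings;
\item the uniform sign with alternating face orientation gives precisely $O_i\prec D_i$ and $O_{i+1}\prec D_i$;
\item Hull et al.\ count labelled layer permutations with no quotient.
\end{itemize}
However, your route takes the count as an input. The paper's argument produces $2k-1$ as an output and keeps Proposition~\ref{obj:P2} self-contained, which is what the paper explicitly aims for; it cites Hull et al.\ only as context, not as a premise.

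\textbf{Your peeling backup.} As written, it would not yet stand alone. In the adjacency step, take top face $O_s$ and bottom face $D_t$ with $t>s$. The two seams you name, $(O_s,D_s)$ and $(O_t,D_t)$, need not visibly cross, since a priori $[1,\mathrm{pos}(D_s)]$ and $[\mathrm{pos}(O_t),2k]$ could be disjoint. The contradiction comes from propagating nesting along the chain:
\begin{itemize}
\item $O_{s+1}\prec D_s$ puts $O_{s+1}$ strictly inside $[1,\mathrm{pos}(D_s)]$;
\item $E$-noncrossing then forces $D_{s+1}$ inside;
\item then $O_{s+2}\prec D_{s+1}$ puts $O_{s+2}$ inside, and so on, until $O_t$ is inside while $D_t$ sits at position $2k$.
\end{itemize}
The case $t\le s-2$ is symmetric, using $(D_{s-1},O_s)$ and the $W$ seams. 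Likewise, your claim that \emph{the forms are forced by precedence together with nesting} is essentially the content of A.1--A.2. A fully self-contained version of the backup would therefore end up reproving the paper's relation-word lemma.
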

\begin{proof}
 Appendix A proves the adjacent-span trichotomy, the
normal-form relation word, uniqueness of the extreme-face seam, and the
local/global scope boundary.
\end{proof}

For illustration only, take \(k=3\) and the odd pivot \(p=3\) (so \(s=2\)). The formula gives
\[
R^+(3,3)=(O_2,O_1,D_1,O_3,D_3,D_2)=(F_3,F_1,F_2,F_5,F_6,F_4).
\]
The first and last faces are therefore \(F_3\) and \(F_4\), exactly the two faces incident to seam \(3\). This example illustrates the pivot coordinate and is not a global-realizability argument: Proposition 6.1 still asserts only the inclusion \(R_{\mathrm{glob}}\subseteq R_{\mathrm{loc}}\).

For later use, let \(W_p\) denote the column sequence of \(R^+(k,p)\).
Thus \(W_p\) records the row order with the row label suppressed;
\(A(W_p)\) and \(B(W_p)\) mean the corresponding upper and lower
labelled row orders.

\textbf{Pivot coordinates under the exact symmetries.} Now that pivots
have been defined, Proposition 4.3 gives their literal transport.
Complement \(C\) leaves the numerical pivot unchanged. Row exchange
\(R\) and \(CR\) swap the ordered upper/lower pivots, \[
(a,b)\longmapsto(b,a).
\] Horizontal reflection \(H\) and \(CH\) apply the physical seam
reflection to each row, \[
(a,b)\longmapsto(n-a,n-b).
\] These are coordinate consequences of the exact labelled state
bijections; they do not assert global realizability of arbitrary
row-local pivots.

Table~\ref{tab:m4-tbl3} collects the exact symmetry actions, including the pivot consequences now that pivots have been defined; it summarizes Proposition~\ref{obj:P1} and does not replace its bijection proof.

\begin{table}[htbp]
\centering
\caption{Exact symmetry actions, including pivot-coordinate consequences. This table is placed only after pivots have been defined.}
\label{tab:m4-tbl3}
\resizebox{\linewidth}{!}{\begin{tabular}{llll}
\toprule
Action & Word action & Face/order action & Pivot action \\
\midrule
C & global M/V complement & face labels fixed; reverse total order & (a,b)->(a,b) \\
R & swap U/L period pairs & swap A\_c<->B\_c; reverse total order & (a,b)->(b,a) \\
H & swap h0,h1 & c->n+1-c in each row; reverse total order & (a,b)->(n-a,n-b) \\
CR & complement after row swap & swap A\_c<->B\_c; no order reversal & (a,b)->(b,a) \\
CH & complement after horizontal reflection & c->n+1-c; no order reversal & (a,b)->(n-a,n-b) \\
\bottomrule
\end{tabular}
}
\end{table}

\section{Canonical constant-row
families}\label{canonical-constant-row-families}

The four six-letter canonical words used repeatedly below are collected in Table~\ref{tab:m4-tbl2}; the table is a reading key for the pleat and constant-row mechanisms, not an additional classification claim.

\begin{table}[htbp]
\centering
\caption{Canonical representatives and their structural mechanisms.}
\label{tab:m4-tbl2}
\resizebox{\linewidth}{!}{\begin{tabular}{llllll}
\toprule
Word & $\rho$ & $\tau$ & Mechanism & Compatibility & Multiplicity \\
\midrule
MMMVVM & +1 & -1 & pleat Hamiltonian precedence + full Butterfly audit & N/A & one exact labelled state \\
MVMVMV & -1 & -1 & pleat row-monotone/reverse-row chain + full Butterfly audit & N/A & one exact labelled state \\
MMMMVV & +1 & +1 & row-local pivots + global merge & a=1 OR a=m OR a=b & one labelled state per compatible ordered pair \\
MVMMMM & -1 & +1 & LIFO horizontal reconstruction + pivot blocks & B\_R(a)=B\_R(b) & one labelled state per compatible ordered pair \\
\bottomrule
\end{tabular}
}
\end{table}

The local reduction and symmetries leave two essentially different
constant-row mechanisms. In both, the pivot records the only possible
row-local order. What remains is to decide which ordered pivot pairs can
be merged into a global state and whether such a merge is unique.

\subsection{\texorpdfstring{The canonical word
\(MMMMVV\)}{The canonical word MMMMVV}}\label{the-canonical-word-mmmmvv}

For \(P=MMMMVV\), both physical rows use the \(+\) normal form. A pivot
pair \((a,b)\) therefore prescribes the two row chains \(A(W_a)\) and
\(B(W_b)\). Let \(\operatorname{FullCompat}(k;a,b)\) mean that at least
one globally valid labelled state has exactly these row restrictions.

The criterion has a simple shape: every diagonal pair works, and every
pair with the upper pivot at either boundary works. The proof separates
obstruction, construction, full validity, and uniqueness; in particular,
local legality of the next horizontal event is not the same as existence
of a completable merge.

\begin{theorem}[Canonical $MMMMVV$ compatibility and multiplicity one]\label{obj:T2}
Let \(k\ge2\), \(m=2k-1\), and \(1\le a,b\le m\). Then \[
\operatorname{FullCompat}(k;a,b)
\quad\Longleftrightarrow\quad
(a=1)\ \text{or}\ (a=m)\ \text{or}\ (a=b).
\] Every compatible ordered pivot pair determines exactly one globally
valid labelled state. Conversely, every globally valid labelled
\(MMMMVV\) state determines a unique ordered pivot pair.
\end{theorem}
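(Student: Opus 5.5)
The plan is to reduce everything to a merge problem between two fixed row chains, using Proposition~\ref{obj:P2}. For $P=MMMMVV$ we have $\rho=+1$, $\tau=+1$ and $l_i=-u_i$. Any valid state therefore restricts on row $A$ to $A(W_a)$ and on row $B$ to $B(W_b)$ for unique pivots $a,b$; this proves the converse clause of the theorem at once. The remaining interaction is carried by the horizontal creases $H_c$. By the checkerboard rule with $h_0=h_1=M$, each $H_c$ forces $A_c\prec B_c$ for odd $c$ and $B_c\prec A_c$ for even $c$. The $S$-class Butterfly condition requires the intervals $[\mathrm{pos}(A_c),\mathrm{pos}(B_c)]$ to be pairwise nested or disjoint. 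The $E$/$W$ tests across the two rows ($U$ against $L$) add the same kind of constraint for seams of equal parity.

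So a state is a shuffle of the word $A(W_a)$ with $B(W_b)$ that satisfies:
\begin{itemize}
\item the per-column orientation constraint coming from $H_c$;
\item a noncrossing-matching condition on the $n$ horizontal pairs;
\item noncrossing between same-parity upper and lower seam spans.
\end{itemize}
I will process the merge as a two-pointer scan. Opening a horizontal pair pushes its column and closing it pops. Since the $H$ intervals must form a laminar family, the scan is a stack discipline, and this is the same LIFO mechanism used later for $MVMMMM$.

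\textbf{Uniqueness.} At each step of the scan, the next face must be the top of row $A$ or the top of row $B$. I claim the stack discipline together with the direction of $H_c$ leaves at most one legal choice that can still be completed. If the front face's partner is already open it must be popped, and otherwise the orientation of $H_c$ decides which row's face is allowed to open. I will make this precise by induction on the number of placed faces. The aim is that any two valid merges agree at their first difference, which would contradict either the orientation of $H$ or laminarity.

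\textbf{Construction.} For sufficiency I write explicit merges in three families, keeping $A(W_a)$ and $B(W_b)$ as blocks:
\begin{itemize}
\item $a=b$: interleave columnwise, placing $A_c,B_c$ or $B_c,A_c$ adjacently in the order of $W_a$ with orientation fixed by the parity of $c$. This gives a nested/adjacent pattern, and I verify the $U/L$ spans by showing that upper and lower seam spans coincide up to adjacent shifts.
\item $a=1$ and $a=m$: the upper row begins (respectively ends) at the seam $1$ (respectively $m$) extreme faces. One row is placed essentially as a contiguous block inserted into a single gap of the other, so all $H$ intervals are nested around a common point. I will then check laminarity directly for arbitrary $b$.
\end{itemize}
Full Butterfly validity for arbitrary separation is then a case audit over $H/H$, $U/U$, $L/L$ and $U/L$, in the style of Appendix C.

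\textbf{Necessity.} This is the main obstacle. For $1<a<m$ and $b\neq a$ I must exhibit a forced crossing. I will first try to locate two columns $c<c'$ whose relative order in $W_a$ is opposite to their order in $W_b$, and which both lie in the inner spiral blocks of $W_a$. The orientation constraints on $H_c$ and $H_{c'}$ then force alternating endpoints $A_c\,A_{c'}\,B_c\,B_{c'}$ or a $U_i/L_i$ crossing. For this I will split by the parity of $a$ and of $b$ and by the sign of $b-a$, using the explicit formulas for $R^+(k,p)$. The care needed is showing that such an inverted pair always exists when $a$ is interior. The reason is that the interior pivot splits the row into two spirals, and moving the lower pivot breaks one of them. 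I expect to reduce this to a small number of adjacent-pivot cases and then use monotonicity of the block structure in $p$.
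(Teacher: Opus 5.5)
Your reduction is sound: Proposition~\ref{obj:P2} gives unique pivots for every valid state, so the converse clause is immediate. The orientations $A_c\prec B_c$ ($c$ odd) and $B_c\prec A_c$ ($c$ even) are correct, and your diagonal interleaving is exactly the paper's $D(k,p)$. But each of the three substantive steps has a genuine gap.

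\textbf{Necessity.} An inversion cannot produce the alternation you name. If $c$ precedes $c'$ in $W_a$ and follows it in $W_b$, then $A_c\prec A_{c'}$ and $B_{c'}\prec B_c$, whereas $A_cA_{c'}B_cB_{c'}$ needs the same relative order in both rows. For columns of equal parity an inversion gives a \emph{nested} pair of $H$-intervals, which is perfectly valid. For example, at $k=2$ the compatible pair $(1,3)=(1,m)$ has $W_1=(1,3,4,2)$ and $W_3=(3,1,2,4)$, with inversions $\{1,3\}$ and $\{2,4\}$, and it is realised by $A_1A_3B_3B_1B_2B_4A_4A_2$. The actual obstruction is a directed precedence cycle, not a Butterfly crossing, and it needs a parity-typed inversion: an odd column $o$ and an even column $e$ with $e$ before $o$ in $W_a$ but $o$ before $e$ in $W_b$. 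This gives
\[
A_e\prec A_o\prec B_o\prec B_e\prec A_e .
\]
The opposite mixed-parity inversion does not by itself force anything. What must be proved is that such a witness exists whenever $1<a<m$ and $a\ne b$. The paper does this with the explicit predicate $X_p(i,j)$ ($X_{2s-1}(i,j)\iff i>s,\ j<s$; $X_{2s}(i,j)\iff i\le s,\ j>s$) and a six-case parity table.

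\textbf{Sufficiency.} Your boundary construction works for at most two pairs. For $a=1$ the upper word is $W_1=(o_1,\ldots,o_k,e_k,\ldots,e_1)$. Inserting the lower row into a single gap forces that gap to lie between $A_{o_k}$ and $A_{e_k}$, and $H/H$ laminarity then forces the lower word to be $W_m$. The reverse insertion is impossible because every $W_b$ starts with an odd column, i.e.\ the lower row starts with a closer. The same fact rules out ``all $H$ intervals nested around a common point''. General boundary pairs need interleaved immediate push/pop blocks $Q_c$ around odd/even cores, i.e.\ the four families $L_{\rm odd},L_{\rm even},R_{\rm odd},R_{\rm even}$. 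Their $U/L$ validity then needs a real argument (the slot lemma plus constant endpoint sums), not just an audit.

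\textbf{Uniqueness.} Your pop rule is correct: a row-head closer matching the stack top must be taken. The ``otherwise'' clause fails, however, exactly when the upper head is an odd opener and the lower head an even opener. Both are locally legal, and the $H$ orientation is silent because the columns differ. Take $k=3$ and $(a,b)=(1,3)$. After $A_1A_3B_3B_1$ the stack is empty and the heads are $A_5$ and $B_2$. Choosing $A_5$ violates nothing locally, but then $B_2$ is forced, burying $5$ under $2$ while the lower row requests $B_5$; only $B_2$ completes. So a first divergence is not contradicted by orientation or laminarity at that step, and you need a global argument. The paper compares with the explicit construction and uses parity-projected determinism to reduce to this opener/opener case. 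It then lists where such windows occur along the construction and refutes the wrong switch by requirement cycles such as $B_{o_{s+1}}\prec_B B_{e_k}\prec_H A_{e_k}\prec_A A_{e_{s-1}}\prec_{\rm LIFO}B_{o_{s+1}}$.
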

\begin{proof}

\textbf{1. Interior off-diagonal obstruction.} Write \(o_r=2r-1\) and
\(e_r=2r\). For a row word \(W_p\), define \[
X_p(i,j)\iff e_j\text{ precedes }o_i\text{ in }W_p.
\] The normal-form formulas give \[
X_{2s-1}(i,j)\iff i>s\text{ and }j<s,
\] \[
X_{2s}(i,j)\iff i\le s\text{ and }j>s.
\] If \(1<a<m\) and \(a\ne b\), Appendix~\ref{d.2.-necessity-the-four-edge-cycle} supplies a parity-complete
witness with \(X_a(i,j)=1\) and \(X_b(i,j)=0\). With \(o=2i-1\) and
\(e=2j\), the two row orders and the horizontal \(MMMMVV\) precedences
force the four-cycle \[
A_e\prec A_o\prec B_o\prec B_e\prec A_e,
\] so no global state exists.

\textbf{2. Allowed constructions.} Appendix~\ref{d.3.-five-explicit-existence-families} gives five explicit
families: the diagonal construction and four boundary constructions.
Their pivot pairs cover exactly \(a=b\), \(a=1\), and \(a=m\), including
all endpoint cases at \(k=2\).

\textbf{3. Full validity.} Each construction has the required row
restrictions and a laminar horizontal stack; Appendix~\ref{d.4.-horizontal-stack-validity-of-the-constructions} records the horizontal-stack check. Row-local validity supplies all within-row vertical conditions. Appendix~\ref{d.5.-remaining-ul-full-validity-obligations} discharges the remaining equal-parity $U_i/L_j$ Butterfly family at arbitrary separation. It first relates lower-face insertion slots to actual endpoint alternation, then uses the constant endpoint-sum structure of the four boundary shell families; the diagonal construction is handled separately through its contiguous column blocks and row-local noncrossing.

\textbf{4. Fixed-pivot uniqueness.} A partial horizontal merge can admit more than one H-locally legal next event even when only one choice has a complete H-valid continuation, so the proof never identifies local legality with completability. For diagonal pairs $a=b$, Lemma~\ref{lem:per2-diagonal} shows directly that the matching closer is forced after each opener and hence that $D(k,a)$ is the unique complete H-valid merge. For boundary pairs $a=1$ or $a=m$, Appendix~\ref{d.7.-projected-determinism-and-first-disagreement} compares any hypothetical second complete merge with the explicit construction from D.3--D.4 and takes their first divergence. Projected determinism leaves only an upper-odd versus lower-even opener choice. Appendix~\ref{d.8.-corrected-cross-parity-windows} lists exactly such windows along the known complete constructions, and Appendix~\ref{d.9.-wrong-switch-barriers} excludes the alternative branch by directed requirement cycles. The H-local/H-completability distinction is isolated in Appendix~\ref{d.6.-h-local-legality-is-not-h-completability}. Boundary and diagonal pairs exhaust the allowed set, so every allowed pivot pair has at most one full state, while the explicit constructions supply one.

\textbf{5. State--pivot bijection.} Every global state restricts to two
row-local normal forms, so Proposition 6.1 assigns it a unique ordered
pivot pair. The obstruction restricts the image to the compatible pairs;
existence and fixed-pair uniqueness give the inverse. Hence globally
valid states and compatible ordered pivot pairs are in bijection; Appendix~\ref{d.10.-global-state-to-pivot-bridge-and-multiplicity} records this bridge in the technical closure.
\end{proof}

\begin{figure}[htbp]
\centering
\includegraphics[width=.78\linewidth]{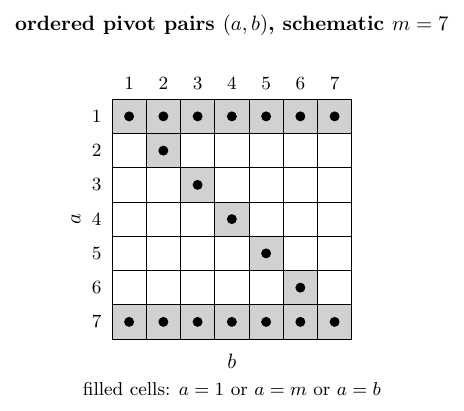}
\caption{Ordered-pair compatibility matrix for canonical $MMMMVV$, shown schematically at $m=7$. The horizontal coordinate is the lower-row pivot $b$ and the vertical coordinate is the upper-row pivot $a$. A filled cell is compatible exactly when $a=1$, $a=m$, or $a=b$; the two boundary conditions are therefore explicitly asymmetric in $a$.}
\label{fig:m4-f4}
\end{figure}

\subsection{\texorpdfstring{The canonical word
\(MVMMMM\)}{The canonical word MVMMMM}}\label{the-canonical-word-mvmmmm}

For \(P=MVMMMM\), the upper row uses \(R^+(k,a)\) and the lower row uses
\(R^-(k,b)\). Here the opposite row orientations make the horizontal
system especially rigid: upper faces open horizontal intervals, lower
faces close them, and \(H/H\) noncrossing becomes a LIFO reconstruction
problem.

Call a merge \textbf{H-compatible} if it satisfies all horizontal
precedences and all \(H/H\) Butterfly constraints. Let
\(\operatorname{HCompat}(k;a,b)\) and
\(\operatorname{FullCompat}(k;a,b)\) denote existence of an H-compatible
and a globally valid merge, respectively.

Define the right block coordinate \[
B_R(1)=0,\qquad
B_R(p)=\left\lfloor\frac p2\right\rfloor\quad(p\ge2).
\] Thus \[
Q_0=\{1\},\qquad Q_r=\{2r,2r+1\}\quad(1\le r\le k-1).
\] The coordinate \(B_R\) simply records which of these pivot blocks
contains \(p\).

\begin{theorem}[Canonical $MVMMMM$ block compatibility and multiplicity one]\label{obj:T3}
For \(k\ge2\) and \(1\le a,b\le m=2k-1\), \[
\operatorname{FullCompat}(k;a,b)
\quad\Longleftrightarrow\quad
B_R(a)=B_R(b).
\] For every compatible ordered pivot pair there is exactly one globally
valid labelled state, and every globally valid state determines one such
pair through its unique row-normal-form pivots.
\end{theorem}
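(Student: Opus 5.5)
The plan follows the same four-stage template as Theorem~\ref{obj:T2}: obstruction, construction, full validity, and uniqueness. The $MMMMVV$ four-cycle argument is replaced by the LIFO (single-stack) structure of the horizontal creases. For $P=MVMMMM$, every $H_c$ carries a directed precedence between $A_c$ and $B_c$. With the upper row in $R^+(k,a)$ and the lower row in $R^-(k,b)$, the opposite orientations should make every $A_c$ precede $B_c$. So $A_c$ opens a horizontal interval and $B_c$ closes it, and the $H/H$ Butterfly condition (all $H_c$ share target class $S$ and have disjoint wings) says these intervals are laminar.

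\textbf{Step 1: stack reformulation.} Any merge of the two row chains is an interleaving of $A(W_a)$ with $B(W_b)$. Laminarity forces the closers to appear in exactly the reverse order of the currently open openers. Hence $\operatorname{HCompat}(k;a,b)$ holds iff the permutation sending the upper column sequence $W_a$ to the reversed lower sequence $\operatorname{rev}(W_b)$, which is the column order in which the $B$'s appear, is one-stack realizable, i.e.\ avoids the $231$-type obstruction. Moreover, when such a realization exists it is unique, since the stack discipline forces each pop. This already gives multiplicity at most one per pivot pair, modulo the remaining non-$H$ constraints.

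\textbf{Step 2: evaluate the stack condition on pivot words.} Using the explicit formulas for $R^+(k,p)$, I will write $W_a$ and $W_b$ as concatenations of monotone runs of odd and even columns, split at $s=\lceil a/2\rceil$ or $a/2$. Then I will locate a $231$ witness $x,y,z$ whenever $B_R(a)\ne B_R(b)$. The natural witnesses are an odd column between the two pivots together with its neighbouring even columns: moving the pivot across a block boundary $\{2r-1\}\to\{2r\}$ changes which O-run the column $2r\pm1$ joins. Conversely, for $a,b$ in the same block $Q_r$, the two words differ only by the placement of the pair $O_{r+1}$ or $D_r$ at a run end. I will verify directly that the greedy stack simulation never blocks, which yields an explicit merge $M(k;a,b)$. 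The case $Q_0=\{1\}$ is the diagonal $a=b=1$ and is handled the same way.

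\textbf{Step 3: full validity.} Row-local validity (Proposition~\ref{obj:P2}) covers the within-row $U/U$ and $L/L$ Butterflies. The remaining obligation is the equal-parity $U_i/L_j$ family at arbitrary separation. Here I plan to show that in $M(k;a,b)$ each lower face sits in a slot determined by its column's stack depth. Then endpoint intervals of $U_i$ and $L_j$ are nested or disjoint because the nesting of horizontal intervals transfers to vertical seams of equal target class. I expect this step to be the main obstacle: it is a genuinely two-dimensional check with several sub-cases depending on whether $a,b$ are odd or even within the block. I would try first to organize it by the four $(a\bmod2,b\bmod2)$ patterns and exploit the symmetry of each shell, as in Appendix D.5 for the $MMMMVV$ case.

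\textbf{Step 4: bijection.} Every global state restricts to unique pivots by Proposition~\ref{obj:P2}. It must be H-compatible, so its pivots satisfy $B_R(a)=B_R(b)$ by Step 2, and by Step 1 it equals the forced merge $M(k;a,b)$. Existence from Steps 2--3 then gives the bijection between globally valid states and compatible ordered pairs.
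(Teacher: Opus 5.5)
Your Steps 1, 2 and 4 follow the paper's route. You treat horizontal $H/H$ noncrossing as a LIFO stack with a forced (hence unique) merge. You use a $231$-type three-label obstruction for unequal blocks and explicit push/pop schedules inside a block. You get the state--pivot bijection through Proposition~\ref{obj:P2}.

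Step 2, however, is only sketched, and its structural description is off. Inside block $r$ the words are $W_{2r}=c_rR_rd_rL_r$ and $W_{2r+1}=c_rL_rR_rd_r$. They differ by exchanging the segments $L_r$ and $R_rd_r$, not by moving one face at a run end. You still need explicit witnesses for every ordered unequal-block case, e.g.\ $(x,y,z)=(2,1,2s+1)$ against the singleton block, or $(1,2r+1,2r+3)$ for $p_r^{(0)}$ against a higher block. Also, $A_c\prec B_c$ for every $c$ comes directly from $h_0=M$, $h_1=V$ and the checkerboard, not from the row orientations.

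The genuine gap is Step 3, which is announced rather than proved. Without it, the direction $B_R(a)=B_R(b)\Rightarrow\operatorname{FullCompat}(k;a,b)$ is not established. The D.5-style slot analysis split by parity patterns is not carried out. Its premise is also inaccurate: the slot of $B_c$ counts all pushes preceding it, not the stack depth.

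The missing idea is a merge-independent transfer lemma.
\begin{itemize}
\item The vertical $M$ precedences together with $A_c\prec B_c$ force, for each seam $i$, a nested horizontal pair $A_{\alpha_i}\prec A_{\beta_i}\prec B_{\beta_i}\prec B_{\alpha_i}$. So $U_i$ is the opener side and $L_i$ the closer side of two nested $H$ intervals, and $U_i$ and $L_i$ are disjoint.
\item For distinct equal-parity seams $i\ne j$, suppose $U_i$ alternates with $L_j$. One alternation pattern immediately makes two horizontal intervals cross. In the other, laminarity of the four $H$ intervals forces $U_i$ to cross $U_j$ or $L_i$ to cross $L_j$, and both are excluded by row-local noncrossing.
\end{itemize}
Hence every H-compatible merge is globally valid, and $\operatorname{FullCompat}\iff\operatorname{HCompat}$ for every pivot pair. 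No case analysis over the explicit schedules is required.
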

\begin{proof}
 The proof has two independent steps.

\textbf{Step A: horizontal reconstruction is enough.} In \(MVMMMM\),
every horizontal precedence is upper-before-lower. Therefore upper faces
push their horizontal labels and lower faces pop them. Pairwise \(H/H\)
noncrossing is exactly the LIFO stack condition. With the two row chains
fixed, the next requested closer is fixed: if its opener has not yet
appeared, the upper chain is forced until it appears; if the label is on
top of the stack, its closer is forced; if it is buried, completion is
impossible. Hence an H-compatible merge, when it exists, is unique.

Successful H reconstruction is already globally valid. Proposition 6.1
supplies all within-row vertical precedences and same-row vertical
noncrossing. The only remaining same-target class is \(U_i/L_j\) with
\(i\equiv j\pmod2\). Appendix E proves the side-crossing transfer: a
crossing of such a cross-row pair would force a crossing in the
corresponding upper or lower same-row pair. Those are forbidden by the
row-normal forms. Thus \[
\operatorname{FullCompat}(k;a,b)
\quad\Longleftrightarrow\quad
\operatorname{HCompat}(k;a,b).
\]

\textbf{Step B: H compatibility is a block condition.} For
\(1\le r\le k-1\), write the two pivots in block \(r\) as \[
p_r^{(0)}=2r,\qquad p_r^{(1)}=2r+1.
\] Let \(c_r=2r+1\), \(d_r=2r+2\), and define \[
L_r=(2r-1,2r-3,\ldots,1,2,4,\ldots,2r),
\] \[
R_r=(2r+3,2r+5,\ldots,2k-1,2k,2k-2,\ldots,2r+4).
\] The row words have the endpoint normal forms \[
W_{p_r^{(0)}}=c_rR_rd_rL_r,
\qquad
W_{p_r^{(1)}}=c_rL_rR_rd_r.
\] Appendix E proves that unequal blocks always contain a three-label
LIFO obstruction, whereas each of the four ordered pivot pairs inside
one nonzero block has an explicit legal push/pop schedule; the singleton
block \(Q_0\) uses push-all/pop-all. Consequently \[
\operatorname{HCompat}(k;a,b)
\quad\Longleftrightarrow\quad
B_R(a)=B_R(b).
\] Combining Steps A and B gives the criterion. Step A gives fixed-pair
uniqueness, and Proposition 6.1 gives unique pivots for every global
state, so states and compatible ordered pivot pairs are in bijection.
\end{proof}

\begin{figure}[htbp]
\centering
\includegraphics[width=.55\linewidth]{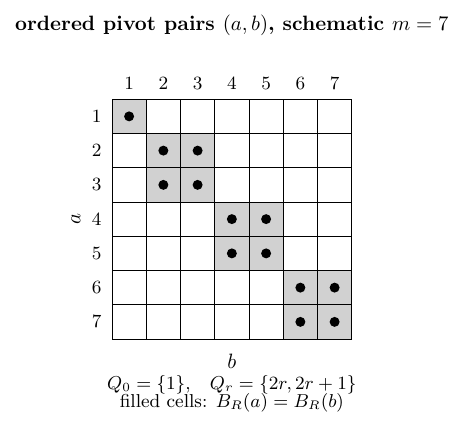}
\caption{Ordered-pair block compatibility matrix for canonical $MVMMMM$, shown schematically at $m=7$. The axes are the ordered pivots $(a,b)$. The singleton block is $Q_0=\{1\}$ and the nonzero blocks are $Q_r=\{2r,2r+1\}$. A filled cell satisfies $B_R(a)=B_R(b)$; each nonzero $2\times2$ block visibly contains four distinct ordered compatible pairs.}
\label{fig:m4-f5}
\end{figure}

\section{Exact transports and the all-64
classification}\label{exact-transports-and-the-all-64-classification}

The two canonical constant-row families are now completely understood.
Exact labelled symmetries transfer them to the remaining constant-row
words, after which the all-64 theorem is only a synthesis of the local
split and the three nonzero structural mechanisms.

Horizontal reflection has already been shown to send a physical seam by
\(p\mapsto n-p\). Only now define the reflected block coordinate \[
B_L(p):=B_R(n-p),\qquad 1\le p\le m.
\]

\begin{proposition}[Exact transports of the constant-row fibres]\label{obj:P3}
Assume \(k\ge2\), \(\delta_0=\delta_1=-1\), and \(\tau=+1\).

If \(\rho=+1\), the four words are \[
MMMMVV,\quad MMVVMM,\quad VVMMVV,\quad VVVVMM,
\] and their exact state sets are respectively
\[
\begin{aligned}
&F(MMMMVV,k),\quad T_R(F(MMMMVV,k)),\\
&T_{CR}(F(MMMMVV,k)),\quad T_C(F(MMMMVV,k)).
\end{aligned}
\] Writing \((\alpha,\beta)\) for the target upper/lower pivots, the
target compatibility criterion is \[
\alpha=1\ \text{or}\ \alpha=m\ \text{or}\ \alpha=\beta
\] for the canonical and complement targets, and \[
\beta=1\ \text{or}\ \beta=m\ \text{or}\ \alpha=\beta
\] for the row-swapped targets, because \(R\) and \(CR\) exchange the
ordered pivots.

If \(\rho=-1\), the four words are \[
MVMMMM,\quad VMMMMM,\quad VMVVVV,\quad MVVVVV,
\] and their exact state sets are respectively
\[
\begin{aligned}
&F(MVMMMM,k),\quad T_H(F(MVMMMM,k)),\\
&T_C(F(MVMMMM,k)),\quad T_{CH}(F(MVMMMM,k)).
\end{aligned}
\] For the canonical and complement targets, compatibility is
\(B_R(\alpha)=B_R(\beta)\). For the horizontally reflected targets, the
source pivots first transform geometrically as \[
(\alpha,\beta)=(n-a,n-b),
\] and only then the target criterion is expressed as \[
B_L(\alpha)=B_L(\beta).
\] Every compatible target pivot fibre is a singleton.
\end{proposition}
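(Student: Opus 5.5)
We first enumerate the locally admissible constant-row words, then compute how each relevant symmetry acts on the six-bit word, and finally transport the two canonical theorems through Proposition~\ref{obj:P1}.

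\textbf{The eight words.} By Corollary~\ref{obj:C1}, $l_i=-\rho u_i$, and $\tau=+1$ means $u_0=u_1$. Hence each word is fixed by the choice of $(h_0,h_1)$ and the common value $u$. For $\rho=+1$ we have $h_0=h_1$ and $l_i=-u$, which gives exactly $MMMMVV$, $MMVVMM$, $VVMMVV$, $VVVVMM$. For $\rho=-1$ we have $h_0\ne h_1$ and $l_i=u$, which gives exactly $MVMMMM$, $VMMMMM$, $VMVVVV$, $MVVVVV$.

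\textbf{Word actions.} We read off from the table in Proposition~\ref{obj:P1}:
\begin{itemize}
\item $R(MMMMVV)=MMVVMM$;
\item $CR(MMMMVV)=VVMMVV$;
\item $C(MMMMVV)=VVVVMM$;
\item $H(MVMMMM)=VMMMMM$;
\item $C(MVMMMM)=VMVVVV$;
\item $CH(MVMMMM)=MVVVVV$.
\end{itemize}
The literal set identities $F(gP,k)=T_g(F(P,k))$ are then immediate from Proposition~\ref{obj:P1}.

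\textbf{Pivot criteria.} Each state has unique pivots, by Proposition~\ref{obj:P2} applied to the target word; its rows are constant-sign, so the normal forms apply. Each $T_g$ is a bijection that carries row restrictions to row restrictions. By the pivot-transport rules recorded after Proposition~\ref{obj:P2}, the source pair $(a,b)$ maps to:
\begin{itemize}
\item $(a,b)$ under $C$;
\item $(b,a)$ under $R$ and $CR$;
\item $(n-a,n-b)$ under $H$ and $CH$.
\end{itemize}
Substituting into Theorem~\ref{obj:T2} gives the stated criteria. For $C$ the criterion $a=1$ or $a=m$ or $a=b$ is unchanged. For $R$ and $CR$ it becomes $\beta=1$ or $\beta=m$ or $\alpha=\beta$. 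Substituting into Theorem~\ref{obj:T3}, the reflected case gives $(\alpha,\beta)=(n-a,n-b)$ with $B_R(a)=B_R(b)$. Since $a=n-\alpha$, this is $B_R(n-\alpha)=B_R(n-\beta)$, i.e.\ $B_L(\alpha)=B_L(\beta)$. Because $p\mapsto n-p$ is an involution of $\{1,\dots,m\}$, every target pair arises from exactly one source pair. Singleton fibres therefore transfer from the multiplicity-one parts of Theorems~\ref{obj:T2} and~\ref{obj:T3}.

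\textbf{Main obstacle.} The delicate step is justifying the pivot-coordinate transport rather than merely asserting it. The issue is what happens when $T_g$ reverses the order or relabels the faces. Reversal exchanges the first and last faces of each row chain, but the pair they form, and hence the seam joining them, is unchanged. Row swap moves the upper chain to the lower row without altering its seam. Reflection sends the seam between columns $p,p+1$ to the seam between $n-p,n-p+1$. Since the pivot is characterised intrinsically in Proposition~\ref{obj:P2} as the seam joining the extreme faces, this verification suffices. No normal-form formula $R^\pm$ needs to be recomputed, even though the orientation sign of a row flips under $C$, $R$ or $H$.
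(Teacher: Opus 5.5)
Your proof is correct and follows the paper's own argument. You transport the two canonical fibre descriptions through the exact labelled bijections of Proposition~\ref{obj:P1} and the induced pivot maps $(a,b)\mapsto(a,b)$, $(b,a)$, or $(n-a,n-b)$, so that multiplicity one carries over fibrewise; your enumeration of the eight words, the explicit word computations, and the extreme-face justification of the pivot maps spell out steps the paper only cites.

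One harmless slip in your closing remark: the row orientation sign flips under $C$ and $R$ but is preserved under $H$, because the checkerboard flip and the order reversal cancel, as in $T_H(R^\sigma(k,p))=R^\sigma(k,n-p)$. Your argument never uses this claim.
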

\begin{proof}
 Proposition 4.3 gives literal state-set bijections, and
the pivot consequences recorded after Proposition 6.1 give the exact
coordinate maps. Apply these bijections to the two canonical theorems.
Since both the state map and pivot map are bijections, each target pivot
fibre is the image of exactly one canonical fibre, so multiplicity one
transports fibrewise. No count equality is used.
\end{proof}

\begin{figure}[htbp]
\centering
\includegraphics[width=.80\linewidth]{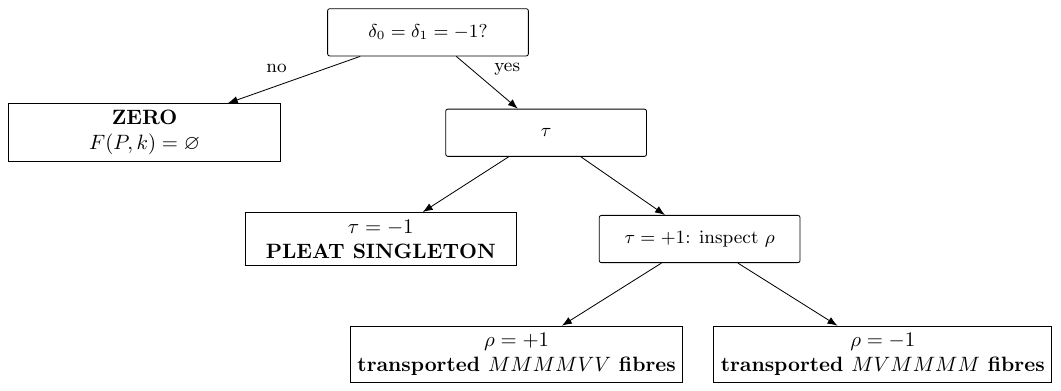}
\caption{Decision diagram for the exact state-set classification. Local admissibility is tested first by $\delta_0$ and $\delta_1$; $\tau$ separates the pleat and constant-row regimes, and $\rho$ separates the two constant-row canonical families. Counts are deliberately omitted because exact labelled state sets are primary.}
\label{fig:m4-f2}
\end{figure}
\FloatBarrier

\begin{theorem}[Exact labelled classification of all period-2 words]\label{obj:T4}
Let \(P\in\{M,V\}^6\) and \(k\ge2\). The exact labelled state set
\(F(P,k)\) is determined by exactly one of the following four regimes.

\begin{center}
\small
\begin{tabularx}{\linewidth}{@{}>{\raggedright\arraybackslash}p{0.22\linewidth}>{\raggedright\arraybackslash}p{0.29\linewidth}>{\raggedright\arraybackslash}X@{}}
\toprule
regime & condition & exact state set \\
\midrule
\textbf{Zero} & $\delta_0\ne-1$ or $\delta_1\ne-1$ & $F(P,k)=\varnothing$ \\
\textbf{Pleat singleton} & $\delta_0=\delta_1=-1$, $\tau=-1$ & the exact singleton from Theorem 5.1, transported by the labelled symmetries \\
\textbf{Constant row, $\rho=+1$} & $\delta_0=\delta_1=-1$, $\tau=+1$, $\rho=+1$ & the exact transported $MMMMVV$ fibres of Proposition 8.1, using the canonical criterion of Theorem 7.1; each compatible fibre is a singleton \\
\textbf{Constant row, $\rho=-1$} & $\delta_0=\delta_1=-1$, $\tau=+1$, $\rho=-1$ & the exact transported $MVMMMM$ fibres of Proposition 8.1, using the canonical criterion of Theorem 7.2; each compatible fibre is a singleton \\
\bottomrule
\end{tabularx}
\end{center}

In particular, the theorem classifies the literal labelled state sets,
not only their cardinalities.
\end{theorem}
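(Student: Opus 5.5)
The plan is a case synthesis driven by the sign invariants $(\delta_0,\delta_1,\tau,\rho)$. These four conditions partition $\{M,V\}^6$. The first regime is exactly the complement of $\delta_0=\delta_1=-1$. Inside $\delta_0=\delta_1=-1$, the sign $\tau\in\{\pm1\}$ splits the admissible words into two classes, and for $\tau=+1$ the sign $\rho\in\{\pm1\}$ splits them again. Hence every word falls into exactly one regime, and the word alone determines which. It remains to show that in each regime $F(P,k)$ is the described set.

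\textbf{Zero regime.} If $\delta_0\ne-1$ or $\delta_1\ne-1$, Lemma~\ref{obj:L1} gives $F(P,k)=\varnothing$ directly for all $k\ge2$.

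\textbf{Admissible words.} For $\delta_0=\delta_1=-1$, the proof of Corollary~\ref{obj:C1} shows that the admissible word is fixed by $(h_0,h_1,u_0,u_1)$ via $l_i=-\rho u_i$. I would sort these $16$ words by $(\tau,\rho)$.
\begin{itemize}
\item For $\tau=-1$ we have $u_1=-u_0$, giving $2\cdot 4=8$ words. I would check by direct listing that these are exactly the eight pleat words tabulated in Theorem~\ref{obj:T1}: $MMMVVM$, $MVMVMV$ and their six listed images. That theorem then gives each $F(P,k)$ as the stated transported singleton.
\item For $\tau=+1$ and $\rho=+1$, one has $h_0=h_1$, $u_0=u_1$ and $l_i=-u_i$. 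These are the four words $MMMMVV$, $MMVVMM$, $VVMMVV$, $VVVVMM$ listed in Proposition~\ref{obj:P3}.
\item For $\tau=+1$ and $\rho=-1$, one has $h_1=-h_0$ and $l_i=u_i$. These are the four words $MVMMMM$, $VMMMMM$, $VMVVVV$, $MVVVVV$, also listed in Proposition~\ref{obj:P3}.
\end{itemize}
The symmetry bookkeeping is what must be checked carefully. Each listed word must actually equal $gP_0$ for the claimed $g$ and canonical $P_0$, using the word actions of Proposition~\ref{obj:P1}. For example, $H(MVMMMM)=VMMMMM$ and $CH(MVMMMM)=MVVVVV$.

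\textbf{Constant-row fibres.} In each constant-row regime, Proposition~\ref{obj:P3} writes $F(P,k)=T_g(F(P_0,k))$ with $P_0\in\{MMMMVV,MVMMMM\}$. Theorems~\ref{obj:T2} and \ref{obj:T3} describe $F(P_0,k)$ as a disjoint union of singleton fibres over the compatible ordered pivot pairs. Since $T_g$ is a literal bijection of labelled sets and its pivot action is the bijection recorded after Proposition~\ref{obj:P2}, the image is a disjoint union of singletons indexed by the transported compatible pairs. This is the ``exact transported fibres'' statement.

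The main obstacle is not conceptual but bookkeeping. One must confirm that the $(\tau,\rho)$ partition of the sixteen admissible words matches the word lists of Theorem~\ref{obj:T1} and Proposition~\ref{obj:P3} exactly, with no word missed or double-assigned. One must also confirm that the symmetry actions assigned to each word are correct. I would settle this by explicitly writing out the eight pleat words and the two four-element orbits and checking the $\delta$, $\tau$, $\rho$ values of each. The final remark, that the classification concerns literal labelled sets rather than only cardinalities, then holds because every step used equalities of sets rather than counts.
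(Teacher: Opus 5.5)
Your proposal is correct and follows the paper's own proof. Lemma~\ref{obj:L1} and Corollary~\ref{obj:C1} remove the $48$ obstructed words, $\tau=-1$ isolates the eight pleat words of Theorem~\ref{obj:T1}, and $\rho$ splits the remaining eight into the two four-word groups whose state sets Proposition~\ref{obj:P3} transports from $MMMMVV$ and $MVMMMM$; your explicit word lists and symmetry images (e.g.\ $H(MVMMMM)=VMMMMM$, $CH(MVMMMM)=MVVVVV$) are accurate and only make visible the bookkeeping the paper states in one line.
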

\begin{proof}
 Lemma 4.1 excludes the \(48\) locally obstructed words,
and Corollary 4.2 leaves exactly \(16\) locally admissible words. Among
those, \(\tau=-1\) gives exactly the eight pleat words handled in
Section 5. If \(\tau=+1\), the sign \(\rho\) separates the remaining
eight words into the two groups of four transported from \(MMMMVV\) and
\(MVMMMM\). Proposition 8.1 gives the exact state-set transport in each
group. The alternatives are mutually exclusive and exhaustive, so all
\(64\) formal words are covered. The literal \(64\)-word table is a
finite cross-check only.
\end{proof}

\begin{table}[htbp]
\centering
\caption{Invariant/regime summary. The exact state-set description precedes the downstream count.}
\label{tab:m4-tbl1}
\resizebox{\linewidth}{!}{\begin{tabular}{llllll}
\toprule
Local condition & $\tau$ & $\rho$ & Type & Exact state set & Count \\
\midrule
either delta\_0 or delta\_1 != -1 & - & - & Zero & empty & 0 \\
delta\_0=delta\_1=-1 & -1 & either & Pleat singleton & exact singleton transported from one of two canonical pleat orders & 1 \\
delta\_0=delta\_1=-1 & +1 & +1 & Constant row: MMMMVV & exact transported singleton fibres indexed by compatible ordered pivots & 6k-5 \\
delta\_0=delta\_1=-1 & +1 & -1 & Constant row: MVMMMM & exact transported singleton fibres indexed by equal block coordinate & 4k-3 \\
\bottomrule
\end{tabular}
}
\end{table}

\begin{corollary}[Closed counts]\label{obj:C2}
Define, downstream of the exact state-set theorem, \[
C_P(k):=|F(P,k)|.
\] For every \(k\ge2\), \[
C_P(k)=
\begin{cases}
0, & \delta_0\ne-1\text{ or }\delta_1\ne-1,\\
1, & \delta_0=\delta_1=-1,\ \tau=-1,\\
6k-5, & \delta_0=\delta_1=-1,\ \tau=+1,\ \rho=+1,\\
4k-3, & \delta_0=\delta_1=-1,\ \tau=+1,\ \rho=-1.
\end{cases}
\]
\end{corollary}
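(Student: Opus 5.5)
The counts follow from Theorem~\ref{obj:T4} by taking cardinalities regime by regime; no further validity argument is needed. In each regime the theorem gives $F(P,k)$ as an explicit labelled set, or as a bijective image of one, so it suffices to count the canonical fibres. The transports of Proposition~\ref{obj:P3} are literal bijections $T_g$, so $|F(gP,k)|=|F(P,k)|$ for every listed $g$.

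The zero regime gives $0$ directly from Lemma~\ref{obj:L1}. For the pleat regime, Theorem~\ref{obj:T1} lists a literal singleton for each of the eight words with $\tau=-1$, giving $1$.

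For $\rho=+1$, Theorem~\ref{obj:T2} puts the states of $MMMMVV$ in bijection with ordered pairs $(a,b)\in\{1,\ldots,m\}^2$ satisfying $a=1$, $a=m$, or $a=b$. I count this set by inclusion--exclusion. The row $a=1$ contributes $m$ pairs and the row $a=m$ contributes $m$ more, disjoint from it since $m\ge3$. The diagonal contributes $m$ pairs, of which $(1,1)$ and $(m,m)$ are already counted. The total is $3m-2=3(2k-1)-2=6k-5$. The other three words with $\rho=+1$ inherit this count through $T_R$, $T_{CR}$, and $T_C$.

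For $\rho=-1$, Theorem~\ref{obj:T3} gives a bijection between the states of $MVMMMM$ and pairs with $B_R(a)=B_R(b)$. The blocks $Q_0=\{1\}$ and $Q_r=\{2r,2r+1\}$ for $1\le r\le k-1$ partition $\{1,\ldots,m\}$: their sizes add to $1+2(k-1)=2k-1=m$, and they are visibly disjoint and cover $1,\ldots,2k-1$. The number of compatible ordered pairs is therefore $\sum_r|Q_r|^2=1+4(k-1)=4k-3$, and this transports to $VMMMMM$, $VMVVVV$, and $MVVVVV$.

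The only point needing care is that the four regimes are mutually exclusive, so each $P$ receives exactly one value. This holds because the case conditions on $(\delta_0,\delta_1,\tau,\rho)$ are disjoint and exhaustive, as already noted in the proof of Theorem~\ref{obj:T4}. I expect no real obstacle.
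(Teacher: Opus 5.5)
Your proposal is correct and follows the paper's own argument. The zero and pleat values are read off directly, the $MMMMVV$ count $3m-2=6k-5$ comes from the union of the rows $a=1$, $a=m$ and the diagonal with only $(1,1)$ and $(m,m)$ double-counted, the $MVMMMM$ count $1+4(k-1)=4k-3$ comes from the block partition, and the exact transports $T_g$ carry these cardinalities to the other words (your explicit checks that the blocks partition $\{1,\ldots,m\}$ and that the two boundary rows are disjoint are details the paper leaves implicit).
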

\begin{proof}
 The zero and pleat values are immediate from the exact
state sets. For \(MMMMVV\), compatible pivot pairs are \[
\{(1,b):1\le b\le m\}\cup
\{(m,b):1\le b\le m\}\cup
\{(a,a):1\le a\le m\},
\] with only \((1,1)\) and \((m,m)\) counted twice, so there are
\(3m-2=6k-5\) singleton fibres. For \(MVMMMM\), \(Q_0=\{1\}\)
contributes one ordered pair and each of the \(k-1\) two-element blocks
contributes four, giving \(1+4(k-1)=4k-3\). Exact transports preserve
these cardinalities. No count enters the proof of Theorem 8.2.
\end{proof}

\section{\texorpdfstring{Boundary case
\(k=1\)}{Boundary case k=1}}\label{boundary-case-k1}

The width-\(2\) map is a separate physical branch. Only the creases
\(H_1,H_2,U_1,L_1\) exist; the formal bits \(u_1,l_1\) have no physical
carriers. Accordingly the generic two-parity argument, the row-pivot
machinery, and the \(k\ge2\) classification are not used here.

\begin{figure}[htbp]
\centering
\includegraphics[width=.92\linewidth]{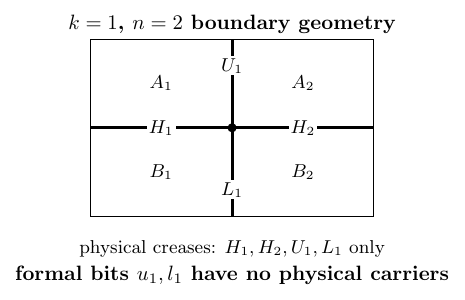}
\caption{Width-two boundary geometry for $k=1$. The physical map has four faces and exactly four creases $H_1,H_2,U_1,L_1$ meeting at the central degree-4 vertex. The formal period bits $u_1$ and $l_1$ have no physical carriers and therefore do not enter the $k=1$ criterion.}
\label{fig:m4-f7}
\end{figure}

\begin{proposition}[Exact boundary state set at $k=1$]\label{obj:P4}
Let \[
P=(h_0,h_1,u_0,u_1,l_0,l_1)\in\{M,V\}^6
\] and define the physical projection \[
\pi_1(P)=(h_0,h_1,u_0,l_0).
\] Formal words with the same \(\pi_1(P)\) define the same physical map
and the same labelled state set. Put \[
\delta_0(P)=h_0h_1u_0l_0.
\] Then \[
\delta_0=+1\Longrightarrow F(P,1)=\varnothing,
\] whereas \[
\delta_0=-1\Longrightarrow F(P,1)=\{\Omega(\pi_1(P))\}
\] for one exact labelled total order \(\Omega(\pi_1(P))\).
\end{proposition}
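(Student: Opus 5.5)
The approach is to work directly on the four-face map. Only the creases $H_1,H_2,U_1,L_1$ are present, and their labels are $h_0,h_1,u_0,l_0$. Because the labels $u_1,l_1$ have no physical carrier, the local rule, the three order-validity clauses of Definition~\ref{obj:D1} and hence $F(P,1)$ depend on $P$ only through $\pi_1(P)$. This gives the first assertion immediately. For $\delta_0=+1$ I will reuse the argument of Lemma~\ref{obj:L1} at the single central vertex. That vertex is incident to exactly $H_1,H_2,U_1,L_1$, and the degree-$4$ rule requires an odd number of mountains, i.e. $h_0h_1u_0l_0=-1$. If $\delta_0=+1$, the assignment is locally inadmissible, so by Definition~\ref{obj:D1} it has no physical state and $F(P,1)=\varnothing$.

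For $\delta_0=-1$, I first write out the precedence graph. The light-up faces are $A_1$ and $B_2$; the dark-up faces are $A_2$ and $B_1$. The four creases form the undirected $4$-cycle $A_1\!-\!B_1\!-\!B_2\!-\!A_2\!-\!A_1$, using $H_1,L_1,H_2,U_1$ in that order. Traversing the cycle in this direction, an edge is oriented forward exactly when $H_1=M$, $L_1=V$, $H_2=M$, $U_1=V$ respectively, because the traversal alternates light$\to$dark and dark$\to$light steps. So the number of forward edges is $\#M(h_0,h_1)+\#V(u_0,l_0)$. This is congruent mod $2$ to the total number of mountains, which is odd when $\delta_0=-1$. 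Hence exactly $1$ or $3$ edges point forward. Equivalently, three consecutive cycle edges are oriented the same way and the fourth points the opposite way.

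Those three edges form a directed Hamiltonian path $x_1\prec x_2\prec x_3\prec x_4$ through the four faces. The remaining edge joins $x_1$ and $x_4$ and is oriented $x_1\prec x_4$, which agrees with the path. So, as in the proof of Theorem~\ref{obj:T1}, the directed M/V precedences admit exactly one linear extension. I define $\Omega(\pi_1(P))=(x_1,x_2,x_3,x_4)$; by construction it depends only on $\pi_1(P)$. This gives $|F(P,1)|\le1$.

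It remains to check the Butterfly clause for this order, which I expect to be the only step needing care. The same-target pairs with disjoint wings are $\{H_1,H_2\}$ (class $S$) and $\{U_1,L_1\}$ (class $E$, since index $1$ is odd). These are exactly the two pairs of opposite edges of the $4$-cycle. In path positions, one opposite pair is $\{x_1x_2,x_3x_4\}$, with endpoint intervals $[1,2]$ and $[3,4]$, which are disjoint. The other is $\{x_2x_3,x_4x_1\}$, with intervals $[2,3]\subset[1,4]$, which are nested. Neither pair alternates. Therefore $\Omega(\pi_1(P))$ is globally valid, so $F(P,1)=\{\Omega(\pi_1(P))\}$.
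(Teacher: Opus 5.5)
Your proof is correct and matches the paper's argument essentially step for step. The paper uses the same traversal $A_1\to B_1\to B_2\to A_2\to A_1$ with traversal signs $h_0,-l_0,h_1,-u_0$, the same Hamiltonian path with a source-to-sink shortcut to get a unique linear extension, the same disjoint/nested check on the two opposite-edge Butterfly pairs, and the same dismissal of $\delta_0=+1$ by failure of local flat-foldability at the central vertex; its extra direct global obstruction for $\delta_0=+1$ (Appendix~F.3: a directed $4$-cycle, or forced alternation of an opposite-edge pair) is supplementary and not needed given Definition~\ref{obj:D1}.
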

\begin{proof}
 The four physical creases form the incidence cycle \[
A_1-H_1-B_1-L_1-B_2-H_2-A_2-U_1-A_1.
\] Orient its edges by primitive M/V precedence. Relative to this cyclic
traversal, the product of the four orientation signs is \(\delta_0\).
When \(\delta_0=-1\), exactly one or three edges oppose the traversal;
the three majority-oriented consecutive edges form a directed
Hamiltonian path through all four faces, and the remaining edge is a
source-to-sink shortcut. Hence there is one linear extension. Its two
opposite-edge Butterfly pairs are respectively disjoint and nested, so
that order is globally valid.

When \(\delta_0=+1\), local flat-foldability fails. Appendix F also
gives the direct global-semantic obstruction: \(0\) or \(4\) traversal
reversals produce a directed cycle, while every \(2\)-reversal
orientation forces strict alternation in one opposite-edge pair.
\end{proof}

Consequently \[
|F(P,1)|=1\iff\delta_0=-1,
\qquad
|F(P,1)|=0\iff\delta_0=+1.
\] Appendix F lists the eight admissible physical projections and their
exact singleton orders.

\section{Finite validation and reproducibility}\label{finite-validation-and-reproducibility-firewall}

The finite computations provide independent validation checks; the all-$k$ classification is established by the symbolic arguments in the preceding sections. The independent finite validation compares exact labelled state sets, rather than counts alone, so that it can detect errors in labels, transports, pivot recovery, or noncrossing semantics that happen to preserve cardinality.

At $k=2$ the finite verification covers all 64 formal period words, the exact pivot fibres of the canonical $MMMMVV$ and $MVMMMM$ families, and all five exact symmetry actions. A separate exhaustive check covers all 64 formal words at $k=1$ and verifies invariance under the physically absent $u_1,l_1$ bits. The literal all-word tables are supplied in Supplement S1, bounded exact validation records in Supplement S2, and the clean generation scripts, environment record, and checksums in Supplement S3.

The publication reproducibility package regenerates the support tables and figures and reruns the bounded semantic checks without requiring the historical development archive. These computational products therefore provide independent bounded verification of the presentation and implementation, but no finite dataset is used to infer an infinite-family statement.

\section{Discussion and scope}
The classification concerns ordinary orthogonal $2\times2k$ maps with a six-bit period-two formal assignment and strict total orders of labelled faces. No symmetry quotient is taken, and distinct fold sequences that end in the same labelled total order are not distinguished. The theorem therefore does not classify arbitrary-period assignments, other map geometries, fold-sequence multiplicity, or unlabelled symmetry orbits.

At the state-set level, the two constant-row regimes have genuinely different structural descriptions: $MMMMVV$ is controlled by a boundary-or-diagonal compatibility condition, while $MVMMMM$ is controlled by equality of the block coordinate $B_R$. Multiplicity one converts those compatibility sets directly into exact labelled fibres, which is why the count formulas can remain secondary to the structural theorem. The separate $k=1$ proposition reflects a physical change in the crease set rather than an exception obtained by substituting $k=1$ into the generic two-parity argument.

The supplementary computation provides exact bounded validation checks, literal classification tables, and a clean reproduction path for the supplementary figures and tables. These finite products are useful for detecting transcription or implementation errors, but they do not enlarge the scope of the symbolic theorems.

\section{Conclusion}
The period-two $2\times2k$ family admits an exact labelled state-set description built from a local obstruction, exact transports, a pleat singleton mechanism, and two canonical constant-row compatibility mechanisms. The contrast between the constant-row branches is structural: $MMMMVV$ allows boundary-or-diagonal pivot pairs, whereas $MVMMMM$ allows exactly equal block-coordinate pairs. Compatible ordered pivot pairs parameterize labelled states with multiplicity one, and the formulas $6k-5$ and $4k-3$ follow by counting those fibres. The width-two boundary has its own empty-or-singleton criterion because its physical crease set omits two formal period bits.

\appendix
\section{Row-Local Normal Forms and the Local/Global Scope Boundary}\label{appendix-a.-row-local-normal-forms-and-the-localglobal-scope-boundary}

This appendix proves Proposition 6.1 using only row-local directed
precedences and same-target vertical noncrossing.

\subsection{Adjacent-span
trichotomy}\label{a.1.-adjacent-span-trichotomy}

For effective sign \(+\), put \(J_i=(O_i,D_i)\). For adjacent
\(J_i,J_{i+1}\), the directed inequalities are \(O_i\prec D_i\),
\(O_{i+1}\prec D_{i+1}\), and \(O_{i+1}\prec D_i\). Of the six total
orders compatible with the first two inequalities, one disjoint order
contradicts \(O_{i+1}\prec D_i\) and the two alternating orders are
forbidden by Butterfly noncrossing. The only possibilities are \[
\mathsf{L}_i: O_{i+1}\prec O_i\prec D_i\prec D_{i+1},
\] \[
\mathsf{R}_i: O_i\prec O_{i+1}\prec D_{i+1}\prec D_i,
\] \[
\mathsf{C}_i: O_{i+1}\prec D_{i+1}\prec O_i\prec D_i.
\]

\subsection{Relation-word normal
form}\label{a.2.-relation-word-normal-form}

Let \(K_i=(O_{i+1},D_i)\) be the even-seam span. For consecutive
relation symbols, each of
\((\mathsf R,\mathsf L),(\mathsf R,\mathsf C),(\mathsf C,\mathsf L),(\mathsf C,\mathsf C)\)
forces strict alternation of \(K_i\) and \(K_{i+1}\) and is therefore
impossible. Hence every admissible relation word is exactly \[
\mathsf L^*\mathsf R^*\qquad\text{or}\qquad \mathsf L^*\mathsf C\mathsf R^*.
\]

A relation word
\(\mathsf L_1\cdots \mathsf L_{s-1}\mathsf R_s\cdots \mathsf R_{k-1}\)
forces two nested chains around \(O_s,D_s\). At an interior junction,
putting \(O_{s+1}\) before \(D_{s-1}\) would make \(K_{s-1}\) and
\(K_s\) alternate, so \(D_{s-1}\prec O_{s+1}\). This fixes the entire
order as \(R^+(k,2s-1)\).

A relation word
\(\mathsf L_1\cdots \mathsf L_{s-1}\mathsf C_s\mathsf R_{s+1}\cdots \mathsf R_{k-1}\)
has the right block wholly before the left block because \(\mathsf C_s\)
gives \(D_{s+1}\prec O_s\). This fixes the order as \(R^+(k,2s)\).

Thus every row-local \(+\) order is one of the explicit normal forms.

\subsection{Sufficiency of the normal
forms}\label{a.3.-sufficiency-of-the-normal-forms}

In each odd-pivot normal form, all \(O_i\prec D_i\) and
\(O_{i+1}\prec D_i\) inequalities are immediate from the two monotone
blocks around the central pair. The \(J\) spans form two nested chains,
mutually disjoint except for the central containing span; the \(K\)
spans similarly form two nested chains. Hence every required same-target
pair is noncrossing.

For an even pivot, the right block precedes the left block. The directed
inequalities again hold, including the central even-seam inequality. The
\(J\) spans split into two nested chains separated by the central
\(\mathsf C\) relation. The span \(K_s\) contains the whole order, while
the remaining \(K\) spans form two nested chains inside it. Thus all
row-local constraints hold.

Therefore \[
R_{\mathrm{loc}}^+(k)=\{R^+(k,p):1\le p\le2k-1\}.
\] Literal reversal reverses every directed inequality and preserves
endpoint alternation, giving \[
R_{\mathrm{loc}}^-(k)=\{R^-(k,p):1\le p\le2k-1\}.
\]

\subsection{Pivot uniqueness}\label{a.4.-pivot-uniqueness}

For \(p=2s-1\), the first and last faces are \(O_s\) and \(D_s\), the
wings of seam \(2s-1\). For \(p=2s\), they are \(O_{s+1}\) and \(D_s\),
the wings of seam \(2s\). Hence the extreme pair identifies one physical
seam and therefore one pivot. Reversal exchanges first and last but
preserves the same seam.

\subsection{Local/global firewall}\label{a.5.-localglobal-firewall}

Every global state restricts to a row order satisfying the row-local
directed and Butterfly constraints. Therefore \[
R_{\mathrm{glob}}^{P,r}(k)\subseteq R_{\mathrm{loc}}^\sigma(k).
\] No reverse inclusion follows from the argument above. In particular,
the row-local count \(2k-1\) does not imply that all \(2k-1\) pivots
occur globally for an arbitrary period word. Global realization is
supplied only by the downstream canonical compatibility theorem in a
family where it has actually been proved.

\begin{center}\rule{0.5\linewidth}{0.5pt}\end{center}

\section{Exact labelled symmetry maps}\label{appendix-b.-exact-labelled-symmetry-maps}

This appendix gives the literal semantic verification behind Proposition
4.3. Pivot coordinates are treated only in B.6, after the
row-normal-form pivot has been defined in Section 6.

\subsection{\texorpdfstring{Complement
\(C\)}{Complement C}}\label{b.1.-complement-c}

Complement fixes every face and crease while replacing every M/V label
by its opposite. Checkerboard orientation and target classes are
unchanged, so every primitive directed precedence reverses. Literal
reversal of the total order reverses every strict inequality and
preserves strict endpoint alternation. Hence \[
T_C(\Pi)=\operatorname{rev}(\Pi)
\] is an exact labelled state bijection.

\subsection{\texorpdfstring{Row swap
\(R\)}{Row swap R}}\label{b.2.-row-swap-r}

The face map \(\phi_R\) exchanges \(A_c\) and \(B_c\), maps \(U_i\) to
\(L_i\) and conversely, and fixes \(H_c\). It flips checkerboard
light/dark orientation while preserving the M/V labels. Seam index, and
therefore the \(E/W\) target class, is unchanged. Thus primitive
precedences reverse and \[
T_R(\Pi)=\operatorname{rev}(\phi_R(\Pi)).
\] Incidence, disjointness, target class, and strict-alternation status
are preserved.

\subsection{\texorpdfstring{Horizontal reflection
\(H\)}{Horizontal reflection H}}\label{b.3.-horizontal-reflection-h}

The map \(\phi_H\) sends columns by \(c\mapsto n+1-c\), so \[
H_c\mapsto H_{n+1-c},\qquad
U_i\mapsto U_{n-i},\qquad
L_i\mapsto L_{n-i}.
\] Because \(n\) is even, the checkerboard orientation of every face
flips. Horizontal-crease period parity therefore swaps \(h_0\) and
\(h_1\). By contrast, \(n-i\) has the same parity as \(i\), so the \(u\)
and \(l\) period slots and the \(E/W\) target classes are preserved. The
precedence reversal caused by the checkerboard flip gives \[
T_H(\Pi)=\operatorname{rev}(\phi_H(\Pi)).
\] A physical seam \(p\) between columns \(p\) and \(p+1\) maps to the
seam between columns \(n-p\) and \(n-p+1\); therefore \[
p\longmapsto n-p.
\]

\subsection{\texorpdfstring{The compositions \(CR\) and
\(CH\)}{The compositions CR and CH}}\label{b.4.-the-compositions-cr-and-ch}

For \(CR\), row swap reverses the directed precedences and complement
reverses them again. The two effects cancel, hence \[
T_{CR}(\Pi)=\phi_R(\Pi).
\] For \(CH\), horizontal reflection and complement likewise contribute
two reversals, so \[
T_{CH}(\Pi)=\phi_H(\Pi).
\] In both cases the same incidence, target-class, and
Butterfly-preservation arguments apply.

\subsection{Exact two-direction state-set
equivalence}\label{b.5.-exact-two-direction-state-set-equivalence}

Each face transformation is bijective and preserves crease incidence and
disjointness. Corresponding creases have the same folded target class.
Bijective relabelling preserves endpoint positions up to renaming, and
literal order reversal is an order anti-isomorphism; both preserve
whether two disjoint same-target crease intervals strictly alternate.
Thus every semantic clause holds for \(\Pi\) exactly when it holds for
\(T_g(\Pi)\). Since every listed transformation is involutive, \[
\Pi\in F(P,k)\iff T_g(\Pi)\in F(gP,k),
\] and therefore \[
F(gP,k)=T_g(F(P,k)).
\] This is equality of literal labelled state sets, not a symmetry
quotient.

\subsection{Pivot coordinates after the row-normal-form
definition}\label{b.6.-pivot-coordinates-after-the-row-normal-form-definition}

Let \(R^\sigma(k,p)\) be the row-local normal form of Proposition 6.1,
where \(p\) is the unique physical seam joining the extreme faces of the
row order. Then \[
T_C(R^\sigma(k,p))=R^{-\sigma}(k,p),
\] so complement leaves the numerical pivot unchanged.

Row swap exchanges physical rows without reflecting seam indices. Thus
on an ordered upper/lower pair, \[
(a,b)\longmapsto(b,a)
\] under both \(R\) and \(CR\).

Horizontal reflection sends the physical seam \(p\) to \(n-p\). The row
normal forms satisfy \[
T_H(R^\sigma(k,p))=R^\sigma(k,n-p),
\] \[
T_{CH}(R^\sigma(k,p))=R^{-\sigma}(k,n-p),
\] and therefore \[
(a,b)\longmapsto(n-a,n-b)
\] under \(H\) and \(CH\). These identities transport row-local pivots;
they do not assert that an arbitrary local pivot is globally realizable.

\begin{center}\rule{0.5\linewidth}{0.5pt}\end{center}

\section{Full Validity of the Canonical Pleat Orders}\label{appendix-c.-full-validity-of-the-canonical-pleat-orders}

The Hamiltonian arguments in Theorem 5.1 prove uniqueness among
precedence-compatible orders. This appendix proves existence by checking
every Butterfly family at arbitrary separation.

\subsection{\texorpdfstring{The order
\(\Omega_+(k)\)}{The order \textbackslash Omega\_+(k)}}\label{c.1.-the-order-omega_k}

For \(1\le q\le k\) the positions are \[
\operatorname{pos}(A_{2q-1})=4q-3,\quad
\operatorname{pos}(B_{2q-1})=4q-2,
\] \[
\operatorname{pos}(B_{2q})=4q-1,\quad
\operatorname{pos}(A_{2q})=4q.
\] All primitive M/V precedences point forward in this order. The sorted
endpoint intervals are \[
I(H_{2q-1})=[4q-3,4q-2],\qquad I(H_{2q})=[4q-1,4q],
\] \[
I(U_{2q-1})=[4q-3,4q],\qquad I(U_{2q})=[4q,4q+1],
\] \[
I(L_{2q-1})=[4q-2,4q-1],\qquad I(L_{2q})=[4q-1,4q+2],
\] where the even-seam formulas run only to \(q=k-1\).

For \(H/H\), distinct column intervals are strictly disjoint. For
same-parity \(U/U\) and \(L/L\), if \(q<r\) the right endpoint of the
\(q\) interval is strictly before the left endpoint of the \(r\)
interval; hence these pairs are disjoint.

For the cross-row \(U/L\) class there is no distance cutoff. For odd
seams \(i=2q-1\), \(j=2r-1\), \[
I(U_i)=[4q-3,4q],\qquad I(L_j)=[4r-2,4r-1].
\] If \(r=q\), the lower interval is strictly nested in the upper one;
if \(r<q\) it is strictly before it; if \(r>q\) it is strictly after it.
For even seams \(i=2q\), \(j=2r\), \[
I(U_i)=[4q,4q+1],\qquad I(L_j)=[4r-1,4r+2],
\] and the same trichotomy holds, with the upper interval nested in the
lower one when \(r=q\). Thus every same-target pair is nested or
disjoint for arbitrary \(q,r\).

\subsection{\texorpdfstring{The order
\(\Omega_-(k)\)}{The order \textbackslash Omega\_-(k)}}\label{c.2.-the-order-omega_-k}

Here \[
\operatorname{pos}(A_c)=c,\qquad
\operatorname{pos}(B_c)=2n-c+1.
\] The primitive directed relations are \(A_i\prec A_{i+1}\),
\(B_{i+1}\prec B_i\), and \(A_c\prec B_c\), all of which point forward.

For horizontal creases, \[
I(H_c)=[c,2n-c+1].
\] If \(c<d\), then \[
c<d<2n-d+1<2n-c+1,
\] so \(I(H_d)\) is strictly nested in \(I(H_c)\).

Upper vertical intervals are \(I(U_i)=[i,i+1]\). Distinct same-target
seams have equal parity and hence differ by at least two, so these
intervals are disjoint. Lower vertical intervals are \[
I(L_i)=[2n-i,2n-i+1]
\] and are likewise disjoint within a target parity.

Finally, \[
\max I(U_i)\le n<n+1\le\min I(L_j)
\] for all \(i,j\), so every cross-row \(U/L\) pair is separated, even
before target parity is imposed.

\subsection{Boundary and strictness
check}\label{c.3.-boundary-and-strictness-check}

The Butterfly predicate uses strict alternation. Genuine tested pairs
have disjoint wing sets, so their four endpoint positions are distinct.
At the left and right boundaries all displayed interval formulas stay
within \(1,\ldots,4k\); even seam formulas stop at \(n-2\). Therefore no
endpoint degeneration or omitted long-range case occurs. The families
\(H/H\), \(U/U\), \(L/L\), and \(U/L\) exhaust all same-target disjoint
crease pairs because horizontal target \(S\) is distinct from vertical
targets \(E,W\).

Hence both canonical pleat orders satisfy every global semantic
condition for every \(k\ge2\).

\begin{center}\rule{0.5\linewidth}{0.5pt}\end{center}

\section{\texorpdfstring{Technical Closure for
\(MMMMVV\)}{Technical Closure for MMMMVV}}\label{appendix-d.-technical-closure-for-mmmmvv}

This appendix supplies the detailed necessity, existence, full-validity,
and uniqueness bridges used in Theorem 7.1.

\subsection{Row-normal-form column words and horizontal
actions}\label{d.1.-row-normal-form-column-words-and-horizontal-actions}

Put \(o_r=2r-1\), \(e_r=2r\) for \(1\le r\le k\). The plus-normal-form
column word is \[
W_{2s-1}=(o_s,o_{s-1},\ldots,o_1,e_1,\ldots,e_{s-1},o_{s+1},\ldots,o_k,e_k,\ldots,e_{s+1},e_s),
\] \[
W_{2s}=(o_{s+1},\ldots,o_k,e_k,\ldots,e_{s+1},o_s,\ldots,o_1,e_1,\ldots,e_s).
\] For \(MMMMVV\), horizontal crease \(H_{o_r}\) has opener \(A_{o_r}\)
and closer \(B_{o_r}\), while \(H_{e_r}\) has opener \(B_{e_r}\) and
closer \(A_{e_r}\).

\subsection{Necessity: the four-edge
cycle}\label{d.2.-necessity-the-four-edge-cycle}

Define \(X_p(i,j)\) by \(e_j\) preceding \(o_i\) in \(W_p\). Direct
inspection gives \[
X_{2s-1}(i,j)\iff (i>s\ \text{and}\ j<s),
\] \[
X_{2s}(i,j)\iff (i\le s\ \text{and}\ j>s).
\] If \(1<a<m\) and \(a\ne b\), choose \[
(i,j)=(s+1,s-1)\quad\text{when }a=2s-1,
\] \[
(i,j)=(s,s+1)\quad\text{when }a=2s.
\] The legal ranges are \(2\le s\le k-1\) in the odd case and
\(1\le s\le k-1\) in the even case. The following complete case table
verifies \(X_a=1\) and \(X_b=0\):

\begin{longtable}[]{@{}
  >{\raggedright\arraybackslash}p{(\columnwidth - 6\tabcolsep) * \real{0.2500}}
  >{\raggedright\arraybackslash}p{(\columnwidth - 6\tabcolsep) * \real{0.2500}}
  >{\raggedright\arraybackslash}p{(\columnwidth - 6\tabcolsep) * \real{0.2500}}
  >{\raggedright\arraybackslash}p{(\columnwidth - 6\tabcolsep) * \real{0.2500}}@{}}
\toprule\noalign{}
\begin{minipage}[b]{\linewidth}\raggedright
parity of \((a,b)\)
\end{minipage} & \begin{minipage}[b]{\linewidth}\raggedright
condition
\end{minipage} & \begin{minipage}[b]{\linewidth}\raggedright
witness
\end{minipage} & \begin{minipage}[b]{\linewidth}\raggedright
reason \(X_b=0\)
\end{minipage} \\
\midrule\noalign{}
\endhead
\bottomrule\noalign{}
\endlastfoot
odd/odd & \(a<b\) & \((s+1,s-1)\) & \(X_b\) would require \(i>t\), but
\(t\ge s+1=i\) \\
odd/odd & \(a>b\) & \((s+1,s-1)\) & \(X_b\) would require \(j<t\), but
\(t\le s-1=j\) \\
odd/even & any \(b\ne a\) & \((s+1,s-1)\) & if \(b=2t\), then \(X_b=1\)
would require \(s+1\le t\) and \(s-1>t\), simultaneously \\
even/odd & any \(b\ne a\) & \((s,s+1)\) & if \(b=2t-1\), then \(X_b=1\)
would require \(s>t\) and \(s+1<t\), simultaneously \\
even/even & \(a<b\) & \((s,s+1)\) & \(X_b\) would require \(j>t\), but
\(t\ge s+1=j\) \\
even/even & \(a>b\) & \((s,s+1)\) & \(X_b\) would require \(i\le t\),
but \(t\le s-1<i=s\) \\
\end{longtable}

Set \(o=2i-1\) and \(e=2j\). In any merge with pivots \((a,b)\), \[
A_e\prec A_o
\] comes from \(X_a=1\) in the upper row. Since \(o\) is odd and \(e\)
even, horizontal orientation gives \[
A_o\prec B_o,\qquad B_e\prec A_e.
\] Since \(X_b=0\), the strict lower row gives \(B_o\prec B_e\). Thus \[
A_e\prec A_o\prec B_o\prec B_e\prec A_e,
\] impossible in a total order. At \(k=2\), the only interior upper
pivot is \(a=2\), the witness is \((i,j)=(1,2)\), and the same cycle is
\(A_4\prec A_1\prec B_1\prec B_4\prec A_4\).

\subsection{Five explicit existence
families}\label{d.3.-five-explicit-existence-families}

For a column \(c\), define \[
Q_c=(A_c,B_c)\quad(c\text{ odd}),\qquad
Q_c=(B_c,A_c)\quad(c\text{ even}).
\] Empty ranges are omitted.

If \(W_p=(c_1,\ldots,c_{2k})\), define the diagonal construction \[
D(k,p)=Q_{c_1}Q_{c_2}\cdots Q_{c_{2k}},
\] which has pivots \((p,p)\).

For \(1\le s\le k\) define \[
\begin{aligned}
L_{\rm odd}(k,s)=&\ A_{o_1}\cdots A_{o_s}\mid B_{o_s}\cdots B_{o_1}\mid B_{e_1}\cdots B_{e_{s-1}}\\
&\mid Q_{o_{s+1}}\cdots Q_{o_k}\mid Q_{e_k}\cdots Q_{e_s}\mid A_{e_{s-1}}\cdots A_{e_1},
\end{aligned}
\] with pivots \((1,2s-1)\), and for \(1\le s\le k-1\) define \[
\begin{aligned}
L_{\rm even}(k,s)=&\ A_{o_1}\cdots A_{o_{s+1}}\mid B_{o_{s+1}}\mid Q_{o_{s+2}}\cdots Q_{o_k}\\
&\mid Q_{e_k}\cdots Q_{e_{s+1}}\mid B_{o_s}\cdots B_{o_1}\mid B_{e_1}\cdots B_{e_s}\mid A_{e_s}\cdots A_{e_1},
\end{aligned}
\] with pivots \((1,2s)\).

For \(1\le s\le k\) define \[
\begin{aligned}
R_{\rm odd}(k,s)=&\ A_{o_k}\cdots A_{o_s}\mid B_{o_s}\mid Q_{o_{s-1}}\cdots Q_{o_1}\mid Q_{e_1}\cdots Q_{e_{s-1}}\\
&\mid B_{o_{s+1}}\cdots B_{o_k}\mid B_{e_k}\cdots B_{e_s}\mid A_{e_s}\cdots A_{e_k},
\end{aligned}
\] with pivots \((m,2s-1)\), and for \(1\le s\le k-1\) define \[
\begin{aligned}
R_{\rm even}(k,s)=&\ A_{o_k}\cdots A_{o_{s+1}}\mid B_{o_{s+1}}\cdots B_{o_k}\mid B_{e_k}\cdots B_{e_{s+1}}\\
&\mid Q_{o_s}\cdots Q_{o_1}\mid Q_{e_1}\cdots Q_{e_s}\mid A_{e_{s+1}}\cdots A_{e_k},
\end{aligned}
\] with pivots \((m,2s)\).

The index ranges partition the four label classes, so every displayed
word is a strict total order of all \(4k\) labels. Deleting all \(B\)
labels or all \(A\) labels gives exactly the claimed row-normal-form
words. The true formula overlaps are \((1,1)\) and \((m,m)\), where the
diagonal and boundary words are literally identical. The pairs \((1,m)\)
and \((m,1)\) are ordinary boundary members of \(L_{\rm odd}(k,k)\) and
\(R_{\rm odd}(k,1)\). All formulas remain literal at \(k=2\) under the
empty-range convention.

\subsection{Horizontal stack validity of the
constructions}\label{d.4.-horizontal-stack-validity-of-the-constructions}

For \(MMMMVV\), the horizontal actions are

\[
A_{o_r}=\operatorname{push}(o_r),\qquad B_{o_r}=\operatorname{pop}(o_r),\qquad
B_{e_r}=\operatorname{push}(e_r),\qquad A_{e_r}=\operatorname{pop}(e_r).
\]

Each \(Q_c\) is therefore an immediate push/pop pair above the
pre-existing stack. The five construction families can be checked
literally.

\begin{itemize}
\tightlist
\item
  \textbf{Diagonal \(D(k,p)\).} Every \(Q_{c_j}\) pushes and immediately
  pops \(c_j\), so the stack is empty before and after every block.
\item
  \textbf{Left odd \(L_{\rm odd}(k,s)\).} The initial
  \(A_{o_1},\ldots,A_{o_s}\) pushes the odd core \((o_1,\ldots,o_s)\),
  and \(B_{o_s},\ldots,B_{o_1}\) pops it. Then
  \(B_{e_1},\ldots,B_{e_{s-1}}\) pushes the even core
  \((e_1,\ldots,e_{s-1})\). All later \(Q\) blocks act above that core,
  and \(A_{e_{s-1}},\ldots,A_{e_1}\) pops it.
\item
  \textbf{Left even \(L_{\rm even}(k,s)\).} The first odd run pushes
  \((o_1,\ldots,o_{s+1})\); \(B_{o_{s+1}}\) pops its top, leaving
  \((o_1,\ldots,o_s)\). The intervening odd and even \(Q\) blocks act
  above that core. Then \(B_{o_s},\ldots,B_{o_1}\) empties the odd core,
  \(B_{e_1},\ldots,B_{e_s}\) pushes a fresh even core, and
  \(A_{e_s},\ldots,A_{e_1}\) pops it.
\item
  \textbf{Right odd \(R_{\rm odd}(k,s)\).} The first run pushes
  \((o_k,o_{k-1},\ldots,o_s)\); \(B_{o_s}\) removes the top. The \(Q\)
  blocks act above the residual odd core \((o_k,\ldots,o_{s+1})\), which
  is then popped by \(B_{o_{s+1}},\ldots,B_{o_k}\). Finally
  \(B_{e_k},\ldots,B_{e_s}\) pushes the even core \((e_k,\ldots,e_s)\)
  and \(A_{e_s},\ldots,A_{e_k}\) pops it.
\item
  \textbf{Right even \(R_{\rm even}(k,s)\).} The initial odd core
  \((o_k,\ldots,o_{s+1})\) is pushed and then popped completely. The run
  \(B_{e_k},\ldots,B_{e_{s+1}}\) pushes an even core; all \(Q\) blocks
  act above it; the final run \(A_{e_{s+1}},\ldots,A_{e_k}\) pops it.
\end{itemize}

All empty endpoint ranges are literal, so the same verification covers
\(s=1\), \(s=k\), and \(s=k-1\) where those parameters are legal. In
every family there is no underflow, every closer matches the current
top, and the final stack is empty. Consequently every horizontal
directed relation holds and every two horizontal intervals are disjoint
or nested, never strictly alternating.

\subsection{\texorpdfstring{Remaining \(U/L\) full-validity
obligations}{Remaining U/L full-validity obligations}}\label{d.5.-remaining-ul-full-validity-obligations}

Proposition 6.1 already handles all directed vertical relations and
same-row equal-target pairs. H-stack validity handles the horizontal
system. Therefore only $U_i/L_j$ with equal seam parity remain.

For the diagonal family, each column is a contiguous block $Q_c$ in
the row-normal-form column order $W_p$. If $i\ne j$ have equal parity,
then the adjacent-column pairs $\{i,i+1\}$ and $\{j,j+1\}$ are disjoint.
Strict alternation of the actual $U/L$ endpoints would force alternation
of those column pairs in $W_p$, contradicting row-local validity from
Proposition 6.1. For $i=j$, the two adjacent column blocks occur with
opposite internal orientations, and direct inspection shows one vertical
interval nested in the other.

For a boundary family fix the upper $A$ chain. Let $\rho(c)$ be the
position of $A_c$, and put
\[
\sigma(c)=\#\{A_d:A_d\prec B_c\}.
\]

\begin{lemma}[Slots and cross-row alternation]\label{lem:per2-slots}
Suppose an upper seam has endpoint ranks $\alpha<\beta$ in the upper
$A$ chain and a lower seam has endpoint slots $x\le y$. The two seams
strictly alternate if and only if exactly one of $x,y$ belongs to the
integer interval $[\alpha,\beta-1]$. In particular, alternation is
impossible if $x=y$, or if $x+y=\alpha+\beta-1$.
\end{lemma}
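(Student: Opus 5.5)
The proof is a direct positional comparison. I interpret the slots concretely and then reduce strict alternation to a single membership condition.

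The positions of the $A$ faces in the merged order are fixed by the upper chain. Write $A^{(1)},\ldots,A^{(2k)}$ for the upper chain in order, so that $A^{(r)}$ has rank $r$. A lower face $B_c$ with slot $\sigma(c)=x$ lies after $A^{(x)}$ and before $A^{(x+1)}$, with $A^{(0)}$ and $A^{(2k+1)}$ read as the two ends of the order. The upper seam has endpoints $A^{(\alpha)}$ and $A^{(\beta)}$. The lower seam has endpoints $B_c$ and $B_{c'}$ with slots $x\le y$; these two faces are distinct, and when $x=y$ they lie in the same gap between consecutive $A$ faces.

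The first step is to show that a lower face with slot $x$ lies strictly between $A^{(\alpha)}$ and $A^{(\beta)}$ exactly when $\alpha\le x\le\beta-1$. If $x\ge\alpha$, the face comes after $A^{(x)}$ and hence after $A^{(\alpha)}$. If $x\le\beta-1$, it comes before $A^{(x+1)}$ and hence before $A^{(\beta)}$. The converse inequalities give the reverse implications. The endpoints are pairwise distinct faces, so ties do not occur.

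The second step applies the criterion that two intervals with four distinct endpoints strictly alternate if and only if exactly one endpoint of the second interval lies strictly inside the first. Combining this with the first step gives the stated equivalence. Both endpoints inside means the lower interval is nested in the upper one. Neither endpoint inside means the two intervals are either disjoint or the lower one contains the upper one.

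For the two special cases:
\begin{itemize}
\item If $x=y$, then both slots either lie in $[\alpha,\beta-1]$ or both lie outside it, so alternation is impossible.
\item If $x+y=\alpha+\beta-1$, write $I=[\alpha,\beta-1]$. The map $t\mapsto \alpha+\beta-1-t$ is an involution of $\mathbb{Z}$ that preserves $I$ and sends $x$ to $y$. Hence $x\in I$ if and only if $y\in I$, and alternation is again impossible.
\end{itemize}

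The only point needing care is the first step, specifically the slot convention at the boundary values $x=0$ and $x=2k$. With the convention that $A^{(0)}$ and $A^{(2k+1)}$ are the ends of the order, the inequalities still hold there, so I expect no genuine obstacle.
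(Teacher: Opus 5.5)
Your proposal is correct and follows the paper's proof essentially step for step. You show that a slot in $[\alpha,\beta-1]$ is exactly the condition for a lower face to lie strictly between the upper endpoints, and that four distinct endpoints alternate exactly when one lower endpoint lies inside; your involution $t\mapsto\alpha+\beta-1-t$, which preserves $[\alpha,\beta-1]$, is a compact repackaging of the paper's two-case integrality argument for the constant-sum case.
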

\begin{proof}
A lower face lies strictly between the upper endpoints exactly when at
least $\alpha$, but fewer than $\beta$, upper faces precede it, i.e. its
slot belongs to $[\alpha,\beta-1]$. Two intervals with four distinct
endpoints alternate exactly when precisely one endpoint of the second
interval lies inside the first. This gives the criterion and the same-slot
assertion. If $x+y=\alpha+\beta-1$ and $x\ge\alpha$, then
$y\le\beta-1$, so both slots lie inside. If $x<\alpha$, integrality gives
$x\le\alpha-1$ and hence $y\ge\beta$, so the lower endpoints lie on the
two exterior sides. Neither case gives exactly one interior endpoint.
\end{proof}

For upper chain $W_1$,
\[
\rho_1(o_r)=r,\qquad \rho_1(e_r)=2k-r+1,
\]
so
\[
S_r^o=[r,2k-r]\quad(1\le r\le k),\qquad
S_r^e=[r+1,2k-r]\quad(1\le r\le k-1).
\]
For upper chain $W_m$,
\[
\rho_m(o_r)=k-r+1,\qquad \rho_m(e_r)=k+r,
\]
so
\[
T_r^o=[k-r+1,k+r-1]\quad(1\le r\le k),\qquad
T_r^e=[k-r,k+r-1]\quad(1\le r\le k-1).
\]
The sums of the two endpoints in these four shell families are,
respectively, $2k$, $2k+1$, $2k$, and $2k-1$.

The required \(\sigma\) values are as follows; the three columns
correspond to \(t<s\), \(t=s\), and \(t>s\).

\begin{longtable}[]{@{}llrrr@{}}
\toprule\noalign{}
construction & column & \(t<s\) & \(t=s\) & \(t>s\) \\
\midrule\noalign{}
\endhead
\bottomrule\noalign{}
\endlastfoot
\(L_{\rm odd}(k,s)\) & \(\sigma(o_t)\) & \(s\) & \(s\) & \(t\) \\
& \(\sigma(e_t)\) & \(s\) & \(2k-s\) & \(2k-t\) \\
\(L_{\rm even}(k,s)\) & \(\sigma(o_t)\) & \(2k-s\) & \(2k-s\) & \(t\) \\
& \(\sigma(e_t)\) & \(2k-s\) & \(2k-s\) & \(2k-t\) \\
\(R_{\rm odd}(k,s)\) & \(\sigma(o_t)\) & \(k-t+1\) & \(k-s+1\) &
\(k+s-1\) \\
& \(\sigma(e_t)\) & \(k+t-1\) & \(k+s-1\) & \(k+s-1\) \\
\(R_{\rm even}(k,s)\) & \(\sigma(o_t)\) & \(k-t+1\) & \(k-s+1\) &
\(k-s\) \\
& \(\sigma(e_t)\) & \(k+t-1\) & \(k+s-1\) & \(k-s\) \\
\end{longtable}

These values are obtained directly by counting preceding \(A\) labels in
the displayed construction words. They give the following complete
lower-seam classification.

\begin{itemize}
\tightlist
\item
  In \(L_{\rm odd}\), odd seam \(2t-1\) is a point for \(t<s\), is
  \(S_s^o\) for \(t=s\), and is \(S_t^o\) for \(t>s\); even seam \(2t\)
  is a point for \(t<s\), is \(S_s^e\) for \(t=s\), and is \(S_t^e\) for
  \(t>s\).
\item
  In \(L_{\rm even}\), odd seam \(2t-1\) is a point for \(t\le s\) and
  is \(S_t^o\) for \(t>s\); even seam \(2t\) is a point for \(t<s\), is
  \(S_s^e\) for \(t=s\), and is \(S_t^e\) for \(t>s\).
\item
  In \(R_{\rm odd}\), odd seam \(2t-1\) is \(T_t^o\) for \(t\le s\) and
  a point for \(t>s\); even seam \(2t\) is \(T_t^e\) for \(t<s\) and a
  point for \(t\ge s\).
\item
  In \(R_{\rm even}\), odd seam \(2t-1\) is \(T_t^o\) for \(t\le s\) and
  a point for \(t>s\); even seam \(2t\) is \(T_t^e\) for \(t\le s\) and
  a point for \(t>s\).
\end{itemize}

Thus every non-point lower seam is exactly a member of the corresponding
upper shell family. For an arbitrary upper seam of the same parity, its
shell and the two lower endpoint slots belong to the same one of the four
families above and therefore have the same endpoint sum. If the upper
shell is $[\alpha,\beta-1]$, the lower slots satisfy
$x+y=\alpha+\beta-1$, so Lemma~\ref{lem:per2-slots} excludes strict
alternation. Point seams are excluded by the same lemma. This proves all
remaining $U_i/L_j$ obligations for arbitrary separation of the seam
indices. The argument uses the constant endpoint sum; numerical
containment of slot intervals by itself is not used.

\subsection{H-local legality is not
H-completability}\label{d.6.-h-local-legality-is-not-h-completability}

For fixed row chains, represent a partial H merge by consumed row-prefix
lengths and the stack \(S\) of open columns. A next row head is
H-locally legal if it obeys one push/pop step. It is H-completable only
if the child state admits a full continuation ending with every face
consumed and the stack empty.

These notions differ. At \(k=2\) with \((a,b)=(1,1)\), after prefix
\(A_1\) both \(A_3\) and \(B_1\) are locally legal, but only \(B_1\)
admits completion. Therefore the uniqueness proof never uses the false
rule ``every locally legal next event is globally forced.''

\begin{lemma}[Diagonal fixed-row uniqueness]\label{lem:per2-diagonal}
For the canonical assignment $\texttt{MMMMVV}$ and every
$1\le p\le m$, the two row orders with pivots $(p,p)$ have exactly one
complete H-valid merge, namely $D(k,p)$.
\end{lemma}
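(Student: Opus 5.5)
Both rows carry the same column word $W_p=(c_1,\ldots,c_{2k})$: the upper row is $A(W_p)$ and the lower row is $B(W_p)$. Existence is already in hand, since $D(k,p)$ has the right row restrictions and a valid stack by D.3--D.4. The task is therefore uniqueness.

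I will prove, by induction along the merge, the invariant that the stack is empty exactly when both row prefixes have consumed the same initial segment $c_1,\ldots,c_{j-1}$ of $W_p$. At such a point the two available heads are $A_{c_j}$ and $B_{c_j}$. Exactly one of them is an opener: $A_{c_j}$ if $c_j$ is odd, $B_{c_j}$ if $c_j$ is even. The other is a closer for a label that is not open, so it cannot be played without underflow. Hence the opener of $c_j$ is forced. The step I have to justify is that the matching closer must come \emph{immediately} afterwards, which the paper phrases as ``the matching closer is forced after each opener.''

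Suppose the opener of $c=c_j$ has been placed, and let $c'=c_{j+1}$. One head is the closer of $c$. The other head, in the row that played the opener, is the face of column $c'$ in that row. If $c'$ has the same parity as $c$, that face is an opener pushed above $c$. I will look for an H-completability contradiction in this case, rather than a local one (D.6 warns that local legality is insufficient). Push $c'$ on top of $c$. Before $c'$ can be popped, the other row must reach its $c'$ face, and in that row's word this face comes after the closer of $c$. So the pop of $c$ precedes the pop of $c'$ while $c$ is buried under $c'$, which violates LIFO. Hence no completion exists. If $c'$ has the opposite parity, the face in the opener's row is a closer for the unopened label $c'$, which is illegal. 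Either way, only the closer of $c$ admits a completion, so the pair $Q_{c}$ is placed contiguously, the stack returns to empty, and the invariant propagates. Induction on $j$ gives that the merge equals $D(k,p)$.

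The main obstacle is making the same-parity case airtight. I need that, in the opener's row, the next face after column $c$ is really column $c_{j+1}$ of the shared word $W_p$, and that in the other row the closer of $c$ strictly precedes the $c'$ face. Both follow because the two rows carry the same word $W_p$ and the prefixes are synchronized by the invariant. I will also check the boundary step $j=2k$, where no second head exists, so the closer is trivially forced.
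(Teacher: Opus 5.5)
Your proposal is correct and follows essentially the same route as the paper. Both arguments run an induction over synchronized prefixes of the common word $W_p$, force the opener at an empty stack, and then rule out the opener-row's alternative next event, either as a closer of a not-yet-opened label or as an opener pushed above $c_j$ whose closer the fixed opposite row places after the closer of $c_j$; your handling of the exhausted-row step and of existence via D.3--D.4 likewise matches.
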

\begin{proof}
Write their common column order as $W_p=(c_1,\ldots,c_n)$.
Assume inductively that the first $r-1$ columns have been processed as
$Q_{c_1}\cdots Q_{c_{r-1}}$. The horizontal stack is empty and both row
heads have column $c_r$. The opener must be emitted first: it is
$A_{c_r}$ for odd $c_r$, and $B_{c_r}$ for even $c_r$.

The other row is now waiting at its matching closer. Suppose the row
that emitted the opener advances instead to its next column $d$.
If that next event is a closer, its matching opener has not appeared,
since it occurs after $c_r$ in the blocked opposite row. If it is an
opener, its label $d$ is pushed above $c_r$. In any complete LIFO trace,
$d$ would then have to close before $c_r$, whereas the opposite row
requires the closer of $c_r$ before the closer of $d$. Both alternatives
are impossible. If this row is exhausted, there is no alternative event.
Hence the matching closer follows immediately, producing $Q_{c_r}$,
emptying the stack, and restoring the induction hypothesis.
The resulting word is $D(k,p)$, which is H-valid by its immediate
push--pop blocks. The proof includes $p=1,m$ as well as interior pivots.
\end{proof}

\subsection{Projected determinism and first
disagreement}\label{d.7.-projected-determinism-and-first-disagreement}

In any complete H-valid merge, deleting all events of one column parity
leaves a legal stack trace. For the odd projection, the upper row fixes
the opener stream and the lower row fixes the closer stream. For the
even projection these roles are exchanged. A fixed opener stream and a
fixed closer stream have at most one complete legal stack trace:
if the next requested closer has not opened, the opener stream is forced
through that label; once it is the top label, closing it is necessary,
since another push would bury the fixed next closer. A requested closer
that is already buried cannot be completed. Thus both parity projections
are uniquely determined whenever they exist.

For a boundary pivot pair, let $T$ be the explicit H-valid construction
from D.3--D.4. Suppose a different complete H-valid merge $T'$ exists,
and consider their first divergence. Their common prefix is a prefix
of the known word $T$. An upper-odd/lower-odd divergence contradicts
uniqueness of the odd projection, and an upper-even/lower-even divergence
contradicts uniqueness of the even projection. An upper-even/lower-odd
divergence would require two distinct closers to be simultaneously legal
at one stack top. The only remaining possibility is an upper-odd opener
versus a lower-even opener.

\subsection{Opener windows on the explicit
constructions}\label{d.8.-corrected-cross-parity-windows}

The following table lists all prefixes of the four explicit boundary
constructions at which both row heads are openers. In the stack column,
entries are written from bottom to top.
\[
\begin{array}{c|c|c|c}
T&\text{range}&\text{stack}&\text{row heads}\\\hline
L_{\rm odd}(k,s)&2\le s\le k-1,\ 1\le t\le s-1
 &(e_1,\ldots,e_{t-1})&(A_{o_{s+1}},B_{e_t})\\
L_{\rm even}(k,s)&\text{none}&-&-\\
R_{\rm odd}(k,s)&\text{none}&-&-\\
R_{\rm even}(k,s)&1\le s\le k-1,\ s+1\le t\le k
 &(e_k,e_{k-1},\ldots,e_{t+1})&(A_{o_s},B_{e_t}).
\end{array}
\]
In every listed window, $T$ chooses the lower-even opener.
For $L_{\rm odd}$, the initial odd core is opened and closed before the
lower even core $e_1,\ldots,e_{s-1}$ is opened. During this last phase
the next upper face is $A_{o_{s+1}}$ exactly when $s<k$. The range is
empty for $s=1$, and for $s=k$ the upper head is already an even closer.
For $R_{\rm even}$, the initial odd core is likewise emptied; the lower
even core is then opened from $e_k$ down to $e_{s+1}$ while the upper
head remains $A_{o_s}$, giving the second displayed range.
For $L_{\rm even}$ and $R_{\rm odd}$, all upper odd openers have appeared
before the first lower even opener is reached. Immediate $Q_c$ blocks
and the final closing phases create no further opener/opener windows.
This exhausts prefixes of the explicit constructions, not arbitrary
H-locally legal dead-end prefixes. That is exactly the scope required
by the first-divergence argument.

\subsection{Wrong-switch barriers and completion of
uniqueness}\label{d.9.-wrong-switch-barriers}

At a left-odd window, suppose $T'$ chooses $A_{o_{s+1}}$ instead of
$B_{e_t}$. Before the lower row can reach $B_{o_{s+1}}$, it must open
$e_t,\ldots,e_{s-1}$ above the newly opened odd label. Consequently
LIFO requires $A_{e_{s-1}}\prec B_{o_{s+1}}$. The fixed row orders and
horizontal precedence then give the impossible cycle
\[
 B_{o_{s+1}}
 \mathrel{\prec_B} B_{e_k}
 \mathrel{\prec_H} A_{e_k}
 \mathrel{\prec_A} A_{e_{s-1}}
 \mathrel{\prec_{\rm LIFO}} B_{o_{s+1}}.
\]
Here $\prec_A,\prec_B$ are the prescribed upper and lower row orders;
$\prec_H$ is horizontal opener-before-closer precedence.

At a right-even window, choosing $A_{o_s}$ forces the later lower opener
$e_{s+1}$ above it before $B_{o_s}$ can appear. LIFO requires
$A_{e_{s+1}}\prec B_{o_s}$, while the other constraints give
\[
 B_{o_s}
 \mathrel{\prec_B} B_{e_1}
 \mathrel{\prec_H} A_{e_1}
 \mathrel{\prec_A} A_{e_{s+1}}
 \mathrel{\prec_{\rm LIFO}} B_{o_s}.
\]
The index ranges in the window table ensure that all events used in
these cycles exist. Thus a first divergence from $T$ is impossible.
Each boundary pair therefore has at most one complete H-valid merge.
Lemma~\ref{lem:per2-diagonal} covers every diagonal pair. Together these
cases cover all pairs $a=1$, $a=m$, or $a=b$. The constructions give
existence and D.5 gives full validity, completing the multiplicity-one
claim without using a count formula.

\subsection{Global-state-to-pivot bridge and
multiplicity}\label{d.10.-global-state-to-pivot-bridge-and-multiplicity}

Every globally valid \(MMMMVV\) state obeys all row-local constraints.
Proposition 6.1 therefore assigns its upper and lower restrictions
unique pivots \((a,b)\). This is a coverage/parameterization statement
only; it does not itself assert compatibility or existence.

Necessity shows the recovered pair is allowed. Sections D.3--D.5 give
one fully valid state for each allowed pair. Lemma~\ref{lem:per2-diagonal}
gives fixed-pair uniqueness on the diagonal, while Sections D.7--D.9
give fixed-pair uniqueness for the boundary families. These cases exhaust
$a=1$, $a=m$, or $a=b$. Hence the global-state-to-pair map is a
bijection and each compatible pair has multiplicity one. Only after this bridge may the compatible-pair
cardinality be identified with the state count.

\begin{center}\rule{0.5\linewidth}{0.5pt}\end{center}

\section{\texorpdfstring{Reconstruction and
block-compatibility closure for
\(MVMMMM\)}{Reconstruction and block-compatibility closure for MVMMMM}}\label{appendix-e.-reconstruction-and-block-compatibility-closure-for-mvmmmm}

This appendix keeps the two logical ingredients of Theorem 7.2 separate:
fixed-row reconstruction/full validity, and the independent block
criterion.

\subsection{Fixed-row semantics}\label{e.1.-fixed-row-semantics}

For \(MVMMMM\), every horizontal relation is \[
A_c\prec B_c.
\] The upper row is \(R^+(k,a)\) and the lower row is \(R^-(k,b)\).
Hence the upper column sequence is the H-opener stream, while the lower
column sequence is the H-closer request stream.

\subsection{Fixed-row reconstruction: H/H noncrossing is
LIFO}\label{e.2.-fixed-row-reconstruction-hh-noncrossing-is-lifo}

With each opener before its matching closer, two horizontal intervals
cross exactly when an earlier-opened interval closes while a
later-opened distinct interval is still open. Pairwise \(H/H\) Butterfly
noncrossing is therefore equivalent to the usual LIFO stack rule.

Fix the two row chains and let \(c\) be the next requested lower label.
If the matching upper opener has not yet been emitted, no lower label
may bypass \(c\), so the upper chain is forced until that opener
appears. If \(c\) is the stack top, its closer is forced immediately;
pushing another opener would block the fixed next closer. If \(c\) is
already buried in the stack, no completion exists. Induction over the
lower events proves that an H-compatible merge is unique when it exists,
and that the forced reconstruction succeeds exactly when H compatibility
holds.

\subsection{Automatic remainder after horizontal
reconstruction}\label{e.3.-automatic-remainder-after-horizontal-reconstruction}

Proposition 6.1 supplies all upper and lower vertical directed
constraints and all same-row same-target vertical noncrossing. The H
stack supplies every horizontal precedence and every \(H/H\) noncrossing
condition. Since the horizontal target \(S\) is distinct from the
vertical targets \(E,W\), no mixed horizontal/vertical twin family
occurs. The only remaining class is \(U_i/L_j\) with equal seam parity.

For seam \(i\), orient the adjacent horizontal nesting as \[
A_{\alpha_i}\prec A_{\beta_i}\prec B_{\beta_i}\prec B_{\alpha_i},
\] where \((\alpha_i,\beta_i)=(i,i+1)\) for odd \(i\) and \((i+1,i)\)
for even \(i\). Thus \(U_i\) is the opener-side interval and \(L_i\) the
closer-side interval of the same nested H pair.

For distinct equal-parity seams \(i,j\), the two adjacent column pairs
are disjoint. Consider two nested H pairs \[
o_x<o_y<c_y<c_x,
\qquad
o_p<o_q<c_q<c_p,
\] with opener-side intervals \(U_e=[o_x,o_y]\), \(U_f=[o_p,o_q]\) and
closer-side intervals \(L_e=[c_y,c_x]\), \(L_f=[c_q,c_p]\). Suppose
\(U_e\) crosses \(L_f\).

If \[
c_q<o_x<c_p<o_y,
\] then \[
o_p<o_x<c_p<c_x,
\] so the horizontal intervals \([o_p,c_p]\) and \([o_x,c_x]\) cross,
contradicting horizontal laminarity. Hence the only possible alternating
order is \[
o_x<c_q<o_y<c_p.
\] If \(o_p<o_x\), horizontal laminarity of \([o_q,c_q]\) with
\([o_x,c_x]\) forces \(o_x<o_q\), giving \[
o_p<o_x<o_q<o_y,
\] so \(U_e\) crosses \(U_f\). If instead \(o_x<o_p\), laminarity of
\([o_p,c_p]\) with \([o_y,c_y]\) forces \(c_y<c_p\), while laminarity of
\([o_x,c_x]\) with \([o_p,c_p]\) forces \(c_p<c_x\). Together with
\(c_q<o_y<c_y\) this yields \[
c_q<c_y<c_p<c_x,
\] so \(L_e\) crosses \(L_f\). Therefore \[
U_i\text{ crosses }L_j
\Longrightarrow
(U_i\text{ crosses }U_j)\ \text{or}\ (L_i\text{ crosses }L_j).
\] Equal parity gives the same vertical target class, and Proposition
6.1 forbids both same-row alternatives. For \(i=j\), the upper interval
lies in the opener side of the nested adjacent H pair and the lower
interval in the closer side, so they are disjoint. Thus every
H-compatible merge is globally valid: \[
\operatorname{FullCompat}(k;a,b)
\iff
\operatorname{HCompat}(k;a,b).
\]

\subsection{Endpoint normal forms for the block
criterion}\label{e.4.-endpoint-normal-forms-for-the-block-criterion}

Let \(W_p\) denote the \(+\) row-normal-form column word for pivot
\(p\). Because the lower row is the reverse normal form, its closer
request stream is \[
\overleftarrow{W}_p:=\operatorname{rev}(W_p).
\] The singleton-block word is \[
W_1=(1,3,5,\ldots,2k-1,2k,2k-2,\ldots,2).
\] For \(1\le r\le k-1\), name the two pivots in block \(r\) by \[
p_r^{(0)}=2r,
\qquad
p_r^{(1)}=2r+1.
\] Set \(c_r=2r+1\), \(d_r=2r+2\) and \[
L_r=(2r-1,2r-3,\ldots,1,2,4,\ldots,2r),
\] \[
R_r=(2r+3,2r+5,\ldots,2k-1,2k,2k-2,\ldots,2r+4).
\] Then \[
W_{p_r^{(0)}}=c_rR_rd_rL_r,
\qquad
W_{p_r^{(1)}}=c_rL_rR_rd_r,
\] and \[
\overleftarrow{W}_{p_r^{(0)}}=
\operatorname{rev}(L_r)d_r\operatorname{rev}(R_r)c_r,
\] \[
\overleftarrow{W}_{p_r^{(1)}}=
d_r\operatorname{rev}(R_r)\operatorname{rev}(L_r)c_r.
\] At \(r=k-1\), \(R_r\) is empty and the formulas remain literal. The
block coordinate records the common junction index: \[
B_R(1)=0,
\qquad
B_R(p_r^{(0)})=B_R(p_r^{(1)})=r.
\]

\subsection{Necessity for the block criterion: unequal
blocks}\label{e.5.-necessity-for-the-block-criterion-unequal-blocks}

For three distinct horizontal labels \(x,y,z\), suppose the opener order
contains \[
y\prec_W z\prec_W x
\] while the closer stream requires \[
x\prec_{\leftarrow W} y\prec_{\leftarrow W} z.
\] No LIFO schedule can realize both. When \(x\) closes, both \(y\) and
\(z\) have already opened; after \(x\) is removed, \(z\) lies above
\(y\), yet the prescribed closer stream asks for \(y\) before \(z\).
Additional pushes cannot repair the order and \(z\) is not permitted to
close first.

Every unequal ordered block pair contains such a witness. Since
\(\overleftarrow{W}_b=\operatorname{rev}(W_b)\), it is enough to verify
\[
y\prec_{W_a}z\prec_{W_a}x,
\qquad
z\prec_{W_b}y\prec_{W_b}x.
\] The following table covers all ordered unequal-block cases directly.

\begin{longtable}[]{@{}
  >{\raggedright\arraybackslash}p{(\columnwidth - 8\tabcolsep) * \real{0.2000}}
  >{\raggedright\arraybackslash}p{(\columnwidth - 8\tabcolsep) * \real{0.2000}}
  >{\raggedright\arraybackslash}p{(\columnwidth - 8\tabcolsep) * \real{0.2000}}
  >{\raggedright\arraybackslash}p{(\columnwidth - 8\tabcolsep) * \real{0.2000}}
  >{\raggedright\arraybackslash}p{(\columnwidth - 8\tabcolsep) * \real{0.2000}}@{}}
\toprule\noalign{}
\begin{minipage}[b]{\linewidth}\raggedright
ordered case
\end{minipage} & \begin{minipage}[b]{\linewidth}\raggedright
range
\end{minipage} & \begin{minipage}[b]{\linewidth}\raggedright
\((x,y,z)\)
\end{minipage} & \begin{minipage}[b]{\linewidth}\raggedright
order in \(W_a\)
\end{minipage} & \begin{minipage}[b]{\linewidth}\raggedright
order in \(W_b\)
\end{minipage} \\
\midrule\noalign{}
\endhead
\bottomrule\noalign{}
\endlastfoot
\(1/p_s^{(0)}\) & \(1\le s\le k-1\) & \((2,1,2s+1)\) & \(1<2s+1<2\) &
\(2s+1<1<2\) \\
\(1/p_s^{(1)}\) & \(1\le s\le k-1\) & \((2,1,2s+1)\) & \(1<2s+1<2\) &
\(2s+1<1<2\) \\
\(p_r^{(0)}/1\) & \(1\le r\le k-1\) & \((2,2r+1,1)\) & \(2r+1<1<2\) &
\(1<2r+1<2\) \\
\(p_r^{(1)}/1\) & \(1\le r\le k-1\) & \((2,2r+1,1)\) & \(2r+1<1<2\) &
\(1<2r+1<2\) \\
\(p_r^{(0)}/p_s^{(0)}\) or \(p_r^{(0)}/p_s^{(1)}\) & \(r<s\) &
\((1,2r+1,2r+3)\) & \(2r+1<2r+3<1\) & \(2r+3<2r+1<1\) \\
\(p_r^{(1)}/p_s^{(0)}\) or \(p_r^{(1)}/p_s^{(1)}\) & \(r<s\) &
\((2s,1,2r+3)\) & \(1<2r+3<2s\) & \(2r+3<1<2s\) \\
\(p_r^{(0)}/p_s^{(0)}\) or \(p_r^{(1)}/p_s^{(0)}\) & \(r>s\) &
\((1,2r+1,2r-1)\) & \(2r+1<2r-1<1\) & \(2r-1<2r+1<1\) \\
\(p_r^{(0)}/p_s^{(1)}\) or \(p_r^{(1)}/p_s^{(1)}\) & \(r>s\) &
\((2r,2r+1,1)\) & \(2r+1<1<2r\) & \(1<2r+1<2r\) \\
\end{longtable}

All witnesses lie in range. If \(r<s\), then \(r\le k-2\) and
\(2r+3\le2k-1\); if \(r>s\), then \(r\ge2\) and \(2r-1\ge3\);
singleton/non-singleton cases require only \(k\ge2\). Hence every
unequal-block pair fails H compatibility.

\subsection{Sufficiency for the block criterion: same-block
schedules}\label{e.6.-sufficiency-for-the-block-criterion-same-block-schedules}

For the singleton pair \((1,1)\), push all openers in \(W_1\) and pop
them in reverse order.

Within a nonzero block \(r\), the diagonal pairs \[
(p_r^{(0)},p_r^{(0)}),
\qquad
(p_r^{(1)},p_r^{(1)})
\] again use push-all/pop-all. For \((p_r^{(0)},p_r^{(1)})\), use \[
\text{push }c_r,R_r,d_r;\quad
\text{pop }d_r,\operatorname{rev}(R_r);\quad
\text{push }L_r;\quad
\text{pop }\operatorname{rev}(L_r);\quad
\text{pop }c_r.
\] For \((p_r^{(1)},p_r^{(0)})\), use \[
\text{push }c_r,L_r;\quad
\text{pop }\operatorname{rev}(L_r);\quad
\text{push }R_r,d_r;\quad
\text{pop }d_r,\operatorname{rev}(R_r);\quad
\text{pop }c_r.
\] Every requested closer is the stack top, every label is used once,
and the stack ends empty. Thus equal block coordinate implies H
compatibility.

Combining E.5 and E.6 gives \[
\operatorname{HCompat}(k;a,b)
\iff
B_R(a)=B_R(b).
\] Together with E.3 this proves Theorem 7.2. No finite compatibility
graph or count formula is used as a premise.

\begin{center}\rule{0.5\linewidth}{0.5pt}\end{center}

\section{Detailed Boundary Proof and the Eight Physical Singleton States}\label{appendix-f.-detailed-boundary-proof-and-the-eight-physical-singleton-states}

This appendix uses only the \(n=2\) physical map.

\subsection{Physical projection and crease
census}\label{f.1.-physical-projection-and-crease-census}

The crease-incidence cycle is \[
A_1-H_1-B_1-L_1-B_2-H_2-A_2-U_1-A_1.
\] The four physical labels are \(h_0,h_1,u_0,l_0\). Hence the assigned
map depends only on \(\pi_1(P)=(h_0,h_1,u_0,l_0)\), and each physical
assignment has four formal representatives obtained by arbitrary choices
of the two absent formal bits.

The complete global test consists of the four primitive M/V precedences
and exactly two Butterfly pairs: \((H_1,H_2)\) and \((U_1,L_1)\). These
are the two pairs of opposite edges of the cycle.

\subsection{Negative product: Hamiltonian uniqueness and
validity}\label{f.2.-negative-product-hamiltonian-uniqueness-and-validity}

Traverse the cycle as \(A_1\to B_1\to B_2\to A_2\to A_1\) and assign
sign \(+1\) to a primitive edge pointing with this traversal and \(-1\)
to an edge pointing against it. Direct checkerboard evaluation gives
traversal signs \[
h_0,\quad -l_0,\quad h_1,\quad -u_0,
\] whose product is \(h_0h_1u_0l_0=\delta_0\).

If \(\delta_0=-1\), exactly one or three edges oppose the traversal. In
either orientation, the three majority edges form a directed Hamiltonian
path \(v_0\prec v_1\prec v_2\prec v_3\) and the remaining edge is the
shortcut \(v_0\prec v_3\). Thus there is exactly one linear extension.
The two opposite edge pairs have endpoint intervals \([0,1]\) with
\([2,3]\) and \([1,2]\) with \([0,3]\), respectively; one pair is
disjoint and the other nested. Hence the unique order passes both
Butterfly tests.

\subsection{Positive product: direct global
obstruction}\label{f.3.-positive-product-direct-global-obstruction}

If \(\delta_0=+1\), the number of traversal reversals is \(0,2,\) or
\(4\). With \(0\) or \(4\) reversals the primitive precedences form a
directed \(4\)-cycle. With exactly two reversals, the orientation is
acyclic but every linear extension makes one opposite-edge pair
alternate.

If the two reversed edges are adjacent, the poset has one source, one
sink, and two incomparable middle vertices. The two possible middle
orders force intervals \([0,2]\) and \([1,3]\) for one of the
opposite-edge pairs. If the reversed edges are opposite, the poset has
two sources followed by two sinks; whichever internal source/sink order
is chosen, one opposite-edge pair again has intervals \([0,2]\) and
\([1,3]\). Hence no globally valid state exists.

\subsection{Exact physical state
table}\label{f.4.-exact-physical-state-table}

\begin{longtable}[]{@{}ll@{}}
\toprule\noalign{}
physical projection \((h_0,h_1,u_0,l_0)\) & exact state set \\
\midrule\noalign{}
\endhead
\bottomrule\noalign{}
\endlastfoot
\(MMMV\) & \(\{(A_1,B_1,B_2,A_2)\}\) \\
\(MMVM\) & \(\{(B_2,A_2,A_1,B_1)\}\) \\
\(MVMM\) & \(\{(A_1,A_2,B_2,B_1)\}\) \\
\(MVVV\) & \(\{(A_2,A_1,B_1,B_2)\}\) \\
\(VMMM\) & \(\{(B_2,B_1,A_1,A_2)\}\) \\
\(VMVV\) & \(\{(B_1,B_2,A_2,A_1)\}\) \\
\(VVMV\) & \(\{(B_1,A_1,A_2,B_2)\}\) \\
\(VVVM\) & \(\{(A_2,B_2,B_1,A_1)\}\) \\
\end{longtable}

The other eight physical projections have empty exact state set. The
table is a literal presentation of the analytic Hamiltonian rule, not a
computational premise.


\begin{thebibliography}{99}

\bibitem{Nishat2013}
R.~I. Nishat.
\newblock \emph{Map Folding}.
\newblock M.Sc. thesis, University of Victoria, 2013.

\bibitem{NishatWhitesides2013}
R.~I. Nishat and S. Whitesides.
\newblock Map folding.
\newblock In \emph{Proceedings of the 25th Canadian Conference on Computational Geometry (CCCG 2013)}, pages 49--54, 2013.

\bibitem{Morgan2012}
T.~D. Morgan.
\newblock \emph{Map Folding}.
\newblock M.Eng. thesis, Massachusetts Institute of Technology, 2012.

\bibitem{JiaMitani2026}
Y. Jia and J. Mitani.
\newblock Algebraic characterization of $2\times n$ map foldability via order extensions of $1\times 2n$ strips.
\newblock \emph{JP Journal of Algebra, Number Theory and Applications}, 65(3):419--439, 2026.
\newblock doi:10.17654/0972555526022.

\bibitem{JiaMitaniUehara2020}
Y. Jia, J. Mitani, and R. Uehara.
\newblock Efficient algorithm for $2\times n$ map folding with a box-pleated crease pattern.
\newblock \emph{Journal of Information Processing}, 28:806--815, 2020.
\newblock doi:10.2197/ipsjjip.28.806.

\bibitem{AkitayaDemaineKu2026}
H.~A. Akitaya, E.~D. Demaine, and J.~S. Ku.
\newblock Computing flat-folded states.
\newblock In \emph{Origami8, Volume III: Proceedings of the 8th International Meeting on Origami in Science, Mathematics and Education}, Lecture Notes in Mechanical Engineering, pages 201--219. Springer Singapore, 2026.
\newblock doi:10.1007/978-981-96-6561-7\_14.

\bibitem{AsanoEtAl2010}
T. Asano, E.~D. Demaine, M.~L. Demaine, and R. Uehara.
\newblock Kaboozle is NP-complete, even in a strip.
\newblock In \emph{FUN 2010}, Lecture Notes in Computer Science 6099, pages 28--36. Springer, 2010.
\newblock doi:10.1007/978-3-642-13122-6\_5.

\bibitem{HullEtAl2025}
T.~C. Hull, A. Ibrahim, J. Paltrowitz, N. Ter-Saakov, and G. Wang.
\newblock The stamp folding problem from a mountain-valley perspective.
\newblock \emph{Discrete Mathematics \& Theoretical Computer Science}, 27(3), Article 16, 2025.
\newblock doi:10.46298/dmtcs.15454.

\bibitem{HoshidoEtAl2025}
J. Hoshido, T. Kamata, T. Ansai, and R. Uehara.
\newblock Computational complexity of one-dimensional origami with constraints on thickness at creases.
\newblock \emph{IEICE Transactions on Fundamentals of Electronics, Communications and Computer Sciences}, E108-A(9):1084--1091, 2025.
\newblock doi:10.1587/transfun.2024DMP0004.

\bibitem{Uehara2011}
R. Uehara.
\newblock Stamp foldings with a given mountain-valley assignment.
\newblock In \emph{Origami 5: Fifth International Meeting of Origami Science, Mathematics, and Education}, pages 585--597. CRC Press, 2011.

\bibitem{UmesatoEtAl2013}
T. Umesato, T. Saitoh, R. Uehara, H. Ito, and Y. Okamoto.
\newblock The complexity of the stamp folding problem.
\newblock \emph{Theoretical Computer Science}, 497:13--19, 2013.
\newblock doi:10.1016/j.tcs.2012.08.006.

\bibitem{Ulfarsson2012}
H.~{\'U}lfarsson.
\newblock Describing West-3-stack-sortable permutations with permutation patterns.
\newblock \emph{S{\'e}minaire Lotharingien de Combinatoire}, 67:Article B67d, 2012.

\bibitem{KTT2026}
J.~S. Ku, A. Terao, and K.~N. Terao.
\newblock An algebraic approach to layer ordering constraints for origami flat-foldability.
\newblock In \emph{Origami8, Volume III: Proceedings of the 8th International Meeting on Origami in Science, Mathematics and Education}, Lecture Notes in Mechanical Engineering, pages 317--333. Springer Singapore, 2026.
\newblock doi:10.1007/978-981-96-6561-7\_21.

\end{thebibliography}
\end{document}